\newcommand \dis {\displaystyle}
\newcommand{\dy}{\mathrm{d} y}
\newcommand{\ds}{\mathrm{d} s}
\newcommand{\dl}{\mathrm{d} l}
\renewcommand{\d}{\mathrm{d}}
\newcommand{\ee}{\mathrm{e}}
\newcommand{\R}{\mathbb{R}}
\numberwithin{equation}{section}
\newtheorem{theorem}{Theorem}[section]
\newtheorem{assumption}[theorem]{Assumption}
\newtheorem{claim}[theorem]{Claim}
\newtheorem{definition}[theorem]{Definition}
\newtheorem{lemma}[theorem]{Lemma}
\newtheorem{proposition}[theorem]{Proposition}
\newtheorem{remark}[theorem]{Remark}
\newcommand{\Rmnum}[1]{\expandafter\@slowromancap\romannumeral #1@}
\begin{document}

\title {\bf Existence and Stability of Random Transition Waves for Nonautonomous Fisher-KPP Equations with Nonlocal Diffusion}
\author{Min Zhao
\thanks{Corresponding author. Email addresses: minzhao@mail.bnu.edu.cn (M. Zhao), ryuan@bnu.edu.cn (Rong Yuan). https://orcid.org/0000-0001-7722-318X (M. Zhao)}
, Rong Yuan}
\date{\it Laboratory of Mathematics and Complex Systems (Ministry of Education), \\School of Mathematical Sciences, Beijing Normal University, \\Beijing 100875, People's Republic of China}
\maketitle

\vspace*{-1cm}
\begin{center}

\begin{minipage}[t]{13cm}

{\bf Abstract:}\quad  In this paper, we study the existence and stability of random transition waves for time
heterogeneous Fisher-KPP Equations with nonlocal diffusion. More specifically, we consider general
time heterogeneities both for the nonlocal diffusion kernel and the reaction term. We use  the comparison principle of the scalar equation and the method of upper and lower solutions to investigate the existence of random transition wave solution when the wave speed is large enough. In addition, we show the stability of random transition fronts for non-autonomous Fisher-KPP equations with nonlocal diffusion.

{\bf Keywords}\quad Random transition waves; non-autonomous Fisher-KPP equations; nonlocal diffusion; comparison principle\\
{\bf Mathematics Subject Classification}\quad 35K55, 35C07, 35B35, 45G10, 92D25
\end{minipage}
\end{center}

\section{Introduction}

In this paper, we study the existence and stability of  transition wave solution for the following non-autonomous Fisher-KPP equations with nonlocal dispersal
\begin{equation}\label{1.1}
\frac{\partial u}{\partial t}(t, x)=\int_{\R}J(\theta_{t}\omega, y)[u(t,x-y)-u(t,x)]\dy+a(\theta_{t}\omega)u(t,x)(1-u(t,x)),
\end{equation}
where $t\in\R$, $x\in\R$, $\omega\in\Omega,$ $(\Omega,\mathcal{F},\mathbb{P},\{\theta_{t}\}_{t\in\R})$ is an ergodic metric dynamical system on $\Omega$, and $a:\Omega\to(0,\infty)$ is measurable, and $a^{\omega}(t):=a(\theta_{t}\omega)$ is locally H\"{o}lder continuous for every $\omega\in\Omega$. Here $J^{\omega}:=J(\theta_{t}\omega,y)$ denotes a nonnegative dispersal kernal. The state variable $u = u(t, x)$ represents the population density of the species located at time $t$ and the spatial position $x\in\R$. 

System \eqref{1.1} describes the temporal and spatial evolution of species invasions into some empty environments.
Here we not only give the time-dependent Fisher-KPP term, but also consider the time-varying dispersal kernel function $J^{\omega}$. The kernel function $J^{\omega}$ satisfies the following assumptions.
\begin{assumption}[Kernel $J^{\omega}=J(\theta_{t}\omega, y)$]\label{Ass1.2}
The kernel $J^{\omega}:\Omega \times \mathbb{R} \rightarrow[0, \infty)$ satisfies the following assumptions:
\begin{description}
\item[(i)] The function $J^{\omega}$ is measurable, nonnegative and $J(\cdot, y) \in L_{+}^{\infty}(\Omega,\mathcal{F},\mathbb{P})$ for almost every $y \in \mathbb{R}$;
\item[(ii)] The map $\widetilde{J}^{\omega}: y \mapsto J(\cdot, y)$ from $\mathbb{R}$ into $L^{\infty}(\Omega,\mathcal{F},\mathbb{P})$ is measurable and integrable, namely $\widetilde{J}^{\omega} \in L^{1}\left(\mathbb{R} ; L^{\infty}(\Omega,\mathcal{F},\mathbb{P})\right)$;
\item[(iii)] Its abscissa of convergence 
$$
\sigma(\widetilde{J}^{\omega})=\sup \left\{\mu \geq 0 \text { : the improper integral } \int_{-\infty}^{\infty} \mathrm{e}^{\mu y}\widetilde{J}^{\omega}(y) \mathrm{d}y \text { converge in } \R\right\}
$$
satisfies 
$$
\sigma(\widetilde{J}^{\omega})>0.
$$
\end{description}
\end{assumption}
At present, many scholars are devoted to studying the existence and stability of traveling wave solutions, see \cite{Aronson75,Aronson78,Sattinger76,Uchiyama78}. Fisher \cite{Fisher37} and Kolmogorov, Petrovsky and Piskunov \cite{Kolmogorov37} did pioneering work on the traveling wave solution and take-over properties of the Fisher-KPP reaction-diffusion equation.
Note that for periodic media in time and/or space, it is natural to extend the classical traveling wave solution to the so-called periodic traveling wave solutions or pulsating traveling fronts. For general time and/or space dependent environment,  \cite{Berestycki07,Berestycki12} extend traveling wave solutions in the classical sense to the so-called transition fronts or generalized traveling waves. 

For the time and/or space heterogeneous reaction-diffusion equation, there are many studies on existence of traveling wave solutions and the spreading speed, see \cite{Berestycki05,Freidlin79,Hamel11,Hamel16,Liang06,Liang07,Liang10}. Shen first proved the existence of transition waves for bistable nonlinearities \cite{Shen06}, and further studied the monostable equation with time unique ergodic coefficients \cite{Shen11}. Shen also studied the existence of generalized traveling waves in time recurrent and space periodic monostable equations \cite{Shen11a}. Salako and Shen studied the existence of transition fronts of random and nonautonomous Fisher-KPP equations \cite{Salako19}, respectively. Nadin and Rossi \cite{Nadin12} investigated transition wave solution of the monostable equations in general time heterogeneous environment. They furthermore explored the existence of transition waves for Fisher-KPP equations with general time-heterogeneous and space-periodic coefficients \cite{Nadin15}. Nadin \cite{Nadin09} studied the existence of traveling fronts in space-time periodic media.
Rossi and Ryzhik studied the transition waves for a class of space-time dependent monostable equations \cite{Rossi14}.
Ambrosio, Ducrot and Ruan \cite{Ambrosio20} studied the generalized traveling wave solutions of a non-cooperative reaction-diffusion system in a general time heterogeneous environment. 

In addition to the random reaction-diffusion equation, nonlocal dispersal is more reasonable for some species to travel for some distance, and their movement and interactions may occur between non-adjacent spatial locations \cite{Bao19,Jin,Dong}. 
There are many studies devoted to the existence, nonexistence, and stability problems of traveling wave solutions to the nonlocal diffusion equation \cite{Bates97,Carr04,Coville05,Coville07,Lutscher05}.

For the time and/or space heterogeneous nonlocal diffusion equation, Jin and Zhao \cite{Jin} studied the spatial dynamics of a periodic population model with dispersal. Coville et al. \cite{Coville13} studied the 
pulsating fronts for nonlocal dispersion and KPP nonlinearity.
Bao et al. \cite{Bao16} explored the traveling wave solutions of Lotka-Volterra competition systems with nonlocal dispersal in periodic habitats. Shen et al. \cite{Shen15} studied the transition fronts in nonlocal Fisher-KPP equations in time heterogeneous media. Shen also studied the stability of transition waves and positive entire solutions of Fisher-KPP equations with time and space dependence \cite{Shen17}.
Dcurot and Jin \cite{Ducrot21} studied the generalized travelling fronts for non‑autonomous Fisher‑KPP
equations with nonlocal diffusion. They dealt with a general time heterogeneous both for dispersal kernel function and KPP nonlinearity. 

For the random reaction-diffusion equation, Shen \cite{Shen04} first introduced a notion of traveling
waves in general random media, which is a natural extension of the classical notion of traveling waves. More specifically, She explored the traveling waves in diffusive random media, including time and/or space recurrent, almost periodic, quasiperiodic, periodic ones as special cases. Mierczy${\rm\acute{n}}$ski et al. studied the uniform persistence for 
random parabolic Kolmogorov systems \cite{Mierczynski04}.
Shen et al. \cite{S-Shen17} studied front propagation phenomena of ignition type reaction-diffusion equations in random media. However, there are a few studies on the random nonlocal diffusion equation. In this paper, we focus on the existence and stability of random traveling wave solutions of nonlocal diffusion Fisher-KPP equations. Specifically, different from Ducrot and Jin's work \cite{Ducrot21}, we further consider the stability of random transition waves. For solving the stability problem of system \eqref{1.1}, the construction of the lower solution plays an important role. Based on this, we also consider the existence of traveling wave solutions by constructing lower solutions different from those in the Ducrot and Jin's research \cite{Ducrot21}.

Now we are ready to introduce the definition of a random transition front for \eqref{1.1}. Shen gave a similar definition for random transition front, see \cite{Shen04}.
\begin{definition}\label{def-wave}
	An entire solution $u(t,x;\omega)$ is called a random traveling wave solution or a random transition front of \eqref{1.1} connection $1$ and $0$ if for a.e. $\omega\in\Omega$, 
	\begin{equation}\label{1.2}
		u(t,x;\omega)=U(x-C	(t;\omega),\theta_{t}\omega),
	\end{equation}
	where $U(x,\omega)$ and $C(t;\omega)$ are measurable in $\omega$, and for a.e. $\omega\in\Omega$,
	\begin{equation}\label{1.3}
		0<U(x,\omega)<1
	\end{equation}
	and
	\begin{equation}\label{1.4}
		\lim_{x\rightarrow-\infty}U(x,\theta_{t}\omega)=1,\quad
		\lim_{x\rightarrow+\infty}U(x,\theta_{t}\omega)=0 \text{ uniformly in }t\in\R.
	\end{equation}
	Moreover, if $U_{x}(x, \omega)<0$ for a.e. $\omega\in\Omega$ and all $x\in\R$, $u(t,x;\omega)$ is said to be a nonotone random transition front.
\end{definition}
We observe that $u(t, x;\omega)=v(t, x-C(t ; \omega))$ with $C(t ; \omega)$ being differential in $t$ solves \eqref{1.1} and satisfies $c(t ; \omega)=C^{\prime}(t ; \omega)$ if and only if $v(t, x)$ satisfies
\begin{equation}\label{3.1b}
	\partial_{t}v(t, x)=c(t; \omega)\partial_{x}v(t, x)+\int_{\R} J(\theta_{t}\omega,y)[v(t, x-y)-v(t, x)]\dy+a(\theta_{t}\omega)v(t, x)(1-v(t, x)).
\end{equation}

This paper is organized as follows. In the next section, we establish some preliminary results and state our main results.
In Section 3, we mainly consider the existence of random transition waves of system \eqref{1.1}. We get the results by constructing appropriate upper and lower solutions and using the comparison principle of the scalar equation. In Section 4,  we prove the asymptotic stability of random transition waves for non-autonomous Fisher-KPP equations with nonlocal diffusion.

\section{Preliminaries and main results}
In this section, we first recall the definitions and some properties of least mean and upper mean values for functions in $L^{\infty}(\R)$. To do so we need to introduce some notations that will be used throughout this paper. Second, we present our main results on the existence and stability of random transition fronts for system \eqref{1.1}.

First, we introduce the space aspects.
Let
$$
C_{\text {unif}}^{b}(\mathbb{R})= \{u \in C(\R) | u \text{ is bounded and uniformly continuous }\}
$$
with norm
$\Vert u\Vert_{\infty}=\sup_{x\in\R}|u(x)|$ for $u\in C_{\text {unif}}^{b}(\mathbb{R})$.

For given $u_{0} \in X:=C_{\text {unif }}^{b}(\mathbb{R})$ and $\omega \in \Omega$, let $u\left(t, x ; u_{0}, \omega\right)$ be the solution of \eqref{1.1} with $u\left(0, x ; u_{0}, \omega\right)=u_{0}(x)$. 
Note also that $u \equiv 0$ and $u \equiv 1$ are two constant solutions of \eqref{1.1}.

Now we introduce the following notations
and assumption related to \eqref{1.1}. Let 
\begin{equation}\label{1.2a}
	\lfloor a\rfloor(\omega)=\liminf_{t-s \rightarrow+\infty}\dfrac{1}{t-s}\int_{s}^{t}a(\theta_{\tau}\omega)\d\tau:=\lim_{r\rightarrow+\infty}\inf_{t-s\geq r}\dfrac{1}{t-s}\int_{s}^{t}a(\theta_{\tau}\omega)\d\tau,
\end{equation}
and
\begin{equation}\label{1.3a}
	\lceil a\rceil(\omega)=\limsup_{t-s \rightarrow+\infty}\dfrac{1}{t-s}\int_{s}^{t}a(\theta_{\tau}\omega)\d\tau:=\lim_{r\rightarrow+\infty}\sup_{t-s\geq r}\dfrac{1}{t-s}\int_{s}^{t}a(\theta_{\tau}\omega)\d\tau.
\end{equation}
Notice that
\begin{equation}\label{1.4a}
	\lfloor a\rfloor(\theta_{t}\omega)=\lfloor a\rfloor(\omega) \text{ and } \lceil a\rceil(\theta_{t}\omega)=\lceil a\rceil(\omega), \forall t\in\R,
\end{equation}
and that
\begin{equation}\label{1.5a}
	\lfloor a\rfloor(\omega)=\liminf_{t,s\in\mathbb{Q},t-s \rightarrow+\infty}\dfrac{1}{t-s}\int_{s}^{t}a(\theta_{\tau}\omega)\d\tau \text{ and }\lceil a\rceil(\omega)=\limsup_{t,s\in\mathbb{Q},t-s \rightarrow+\infty}\dfrac{1}{t-s}\int_{s}^{t}a(\theta_{\tau}\omega)\d\tau.
\end{equation}
Then by the countability of the set $\mathbb{Q}$ of rational numbers, both $\lfloor a\rfloor(\omega)$ and $\lceil a\rceil(\omega)$ are measurable in $\omega$.

Next, we introduce the following two assumptions:
\begin{description}
	\item[(H1)] $0<\inf_{t\in\R} a^{\omega}(t)\leq\sup_{t\in\R} a^{\omega}(t)<\infty, \text{ for a.e. }\omega\in\Omega.$
\end{description}
\begin{description}
	\item[(H2)] $0 < \lfloor a\rfloor(\omega)\leq \lceil a\rceil(\omega)<\infty, \text{ for a.e. }\omega\in\Omega.$
\end{description}
\begin{remark}
\begin{description}
	\item[(i)] We observe from (H1) that 
	$$
	0 < \inf_{t\in\R} a^{\omega}(t)\leq \lfloor a\rfloor(\omega)\leq \lceil a\rceil(\omega)\leq\sup_{t\in\R} a^{\omega}(t)<\infty, \text{ for a.e. }\omega\in\Omega.
	$$
\end{description}
\begin{description}
	\item[(ii)] Here we specifically state that the assumption (H1) is mainly used to prove the existence of random traveling wave solution, see Theorem \ref{Th2.3}, while the weaker hypothesis (H2) is used to prove the stability of random transition front, see Theorem \ref{Th2.4}.
\end{description}
\end{remark}
Note that assumption (H2) implies that $\lfloor a\rfloor(\cdot), \lceil a\rceil(\cdot) \in L^{1}(\Omega, \mathcal{F}, \mathbb{P})$ (see Lemma 2.1 in \cite{Salako19}). 
By assumption (H2) and the ergodicity of the metric dynamical system $\left(\Omega, \mathcal{F}, \mathbb{P},\left\{\theta_{t}\right\}_{t \in \mathbb{R}}\right)$, there are $\lfloor a\rfloor, \lceil a\rceil \in \mathbb{R}^{+}$and a measurable subset $\Omega_{0} \subset \Omega$ with $\mathbb{P}\left(\Omega_{0}\right)=1$ such that
\begin{equation}\label{1.6}
\dis\left\{\begin{array}{l}
\theta_{t} \Omega_{0}=\Omega_{0}, \quad \forall t \in \mathbb{R}, \\
\dis\liminf _{t-s \rightarrow \infty} \frac{1}{t-s} \int_{s}^{t} a\left(\theta_{\tau} \omega\right) d \tau=\lfloor a\rfloor, \quad \forall \omega \in \Omega_{0}, \\
\dis\limsup _{t-s \rightarrow \infty} \frac{1}{t-s} \int_{s}^{t} a\left(\theta_{\tau}\omega\right) d \tau=\lceil a\rceil, \quad \forall \omega \in \Omega_{0}.
\end{array}\right.
\end{equation}
Throughout this paper, $\lfloor a\rfloor$ and $\lceil a \rceil$ are referred to as the least mean and the upper mean of $a(\cdot)$, respectively. 

The following lemma gives reformulation of the least and upper
mean value, see \cite{Nadin12,Salako19} for more details.
\begin{lemma}\label{Lem2.1}
	Suppose that $a(\theta_{t}\omega)\in L^{\infty}(\Omega)$, $t\in\R$ and that $0<\lfloor a\rfloor \leq \lceil a\rceil<\infty$, where
	$$
	\lfloor a\rfloor=\liminf _{t -s \rightarrow \infty} \frac{1}{t-s} \int_{s}^{t} a(\theta_{\tau}\omega) d \tau, \quad \lceil a\rceil=\limsup _{t -s \rightarrow \infty} \frac{1}{t-s} \int_{s}^{t} a(\theta_{\tau}\omega) d \tau .
	$$
	Then
	$$
	\lfloor a\rfloor=\sup _{A_{\omega} \in W_{\text {loc }}^{1, \infty}(\R) \cap L^{\infty}(\R)} \operatorname{essinf}_{\tau \in \mathbb{R}}\left(a(\theta_{\tau}\omega)-A^{\prime}_{\omega}(\tau)\right), \text{ for a.e.  }\omega\in\Omega,
	$$
	and
	$$
	\lceil a \rceil=\inf _{A_{\omega} \in W_{\text {loc }}^{1, \infty}(\R) \cap L^{\infty}(\R)} \operatorname{esssup}_{\tau \in \mathbb{R}}\left(a(\theta_{\tau}\omega)-A^{\prime}_{\omega}(\tau)\right), \text{ for a.e.  }\omega\in\Omega.
	$$
\end{lemma}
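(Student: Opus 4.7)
\smallskip
\noindent\textbf{Plan of proof.} The identity has two inequalities, so the plan is to prove each direction separately; I will write out the argument for $\lfloor a\rfloor$ and then remark that the $\lceil a\rceil$ case is symmetric. The easy direction is $\lfloor a\rfloor\ge \mathrm{RHS}$. First I would observe that for any $A_\omega\in W^{1,\infty}_{\mathrm{loc}}(\mathbb{R})\cap L^\infty(\mathbb{R})$,
\begin{equation*}
\frac{1}{t-s}\int_s^t \bigl(a(\theta_\tau\omega)-A_\omega'(\tau)\bigr)\,d\tau
= \frac{1}{t-s}\int_s^t a(\theta_\tau\omega)\,d\tau-\frac{A_\omega(t)-A_\omega(s)}{t-s},
\end{equation*}
and because $A_\omega$ is bounded the second term vanishes as $t-s\to\infty$. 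Taking $\liminf_{t-s\to\infty}$ on both sides and bounding the integrand below by $\operatorname{essinf}_\tau(a(\theta_\tau\omega)-A_\omega'(\tau))$ gives $\lfloor a\rfloor\ge \operatorname{essinf}_\tau(a(\theta_\tau\omega)-A_\omega'(\tau))$; taking sup over $A_\omega$ yields the first inequality.

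For the harder direction $\lfloor a\rfloor\le \mathrm{RHS}$, I plan to construct, for each $\omega\in\Omega_0$ (the full-measure set from \eqref{1.6}) and each $\varepsilon>0$, an explicit gauge
\begin{equation*}
A_\omega^\varepsilon(\tau):=\inf_{s\le\tau}\int_s^\tau\bigl[a(\theta_r\omega)-(\lfloor a\rfloor-\varepsilon)\bigr]\,dr.
\end{equation*}
Three properties must be verified. First, $A_\omega^\varepsilon$ is bounded: since by \eqref{1.6} there exists $N=N(\omega,\varepsilon)$ such that $\int_s^\tau a(\theta_r\omega)\,dr\ge(\lfloor a\rfloor-\varepsilon/2)(\tau-s)$ whenever $\tau-s\ge N$, the integrand inside the inf is $\ge (\varepsilon/2)(\tau-s)$ on that range, so the infimum is attained for $\tau-s\le N$; combined with $a\in L^\infty$ this gives $|A_\omega^\varepsilon(\tau)|\le (\|a^\omega\|_\infty+|\lfloor a\rfloor-\varepsilon|)N$, independent of $\tau$. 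Second, the dynamic-programming inequality
\begin{equation*}
A_\omega^\varepsilon(\tau_2)\le A_\omega^\varepsilon(\tau_1)+\int_{\tau_1}^{\tau_2}\bigl[a(\theta_r\omega)-(\lfloor a\rfloor-\varepsilon)\bigr]\,dr, \qquad \tau_1<\tau_2,
\end{equation*}
(obtained by using the minimizer $s(\tau_1)$ as a test point for $A_\omega^\varepsilon(\tau_2)$) gives Lipschitz continuity, hence $A_\omega^\varepsilon\in W^{1,\infty}_{\mathrm{loc}}(\mathbb{R})$, and dividing by $\tau_2-\tau_1\to 0$ yields $(A_\omega^\varepsilon)'(\tau)\le a(\theta_\tau\omega)-(\lfloor a\rfloor-\varepsilon)$ for a.e.\ $\tau$. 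Therefore $\operatorname{essinf}_\tau(a(\theta_\tau\omega)-(A_\omega^\varepsilon)'(\tau))\ge\lfloor a\rfloor-\varepsilon$, and letting $\varepsilon\downarrow 0$ after taking the sup over $A_\omega$ gives the desired bound.

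The $\lceil a\rceil$ formula is proved analogously: the easy direction uses the same boundary-term argument with $\limsup$, and for the reverse one I would use the symmetric sup-convolution $\widetilde A_\omega^\varepsilon(\tau):=\sup_{s\le\tau}\int_s^\tau[a(\theta_r\omega)-(\lceil a\rceil+\varepsilon)]\,dr$, whose boundedness and Lipschitz properties follow by the mirror argument.

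The main obstacle is the boundedness of $A_\omega^\varepsilon$; everything else is routine once the right candidate is written down. The crux is that the $\liminf$ definition in \eqref{1.6} forces the sign of the integrand to become positive uniformly once $\tau-s$ is large enough, which localizes the infimum to a finite window of size $N(\omega,\varepsilon)$. One must be careful that this $N$ depends on $\omega$, so the gauge $A_\omega^\varepsilon$ is only constructed $\omega$-by-$\omega$ on the invariant full-measure set $\Omega_0$, but this is exactly what the statement (``for a.e. $\omega\in\Omega$'') allows.
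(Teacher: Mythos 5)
Your proposal is correct. The easy inequality ($\lfloor a\rfloor\geq\operatorname{essinf}_\tau(a(\theta_\tau\omega)-A_\omega'(\tau))$ for every admissible $A_\omega$, via the vanishing boundary term $\frac{A_\omega(t)-A_\omega(s)}{t-s}$) is exactly the paper's argument. For the reverse inequality you diverge genuinely: the paper fixes $\alpha<\lfloor a\rfloor$, chooses a window length $T$ with $\alpha<\frac{1}{T}\int_s^{s+T}a(\theta_\tau\omega)\,d\tau<2\lceil a\rceil$ for all $s$, and builds a piecewise gauge $A_\omega(t)=\int_{kT}^t(a(\theta_\tau\omega)-\alpha_k)\,d\tau$ on each cell $[kT,(k+1)T]$, so that $a-A_\omega'$ is literally the cell average $\alpha_k>\alpha$; your route instead uses the inf-convolution $A_\omega^\varepsilon(\tau)=\inf_{s\le\tau}\int_s^\tau[a(\theta_r\omega)-(\lfloor a\rfloor-\varepsilon)]\,dr$, with boundedness forced by the $\liminf$ localizing the infimum to a window of length $N(\omega,\varepsilon)$, and the dynamic-programming inequality giving $(A_\omega^\varepsilon)'\le a-(\lfloor a\rfloor-\varepsilon)$ a.e. Both are complete; your verification of boundedness, the two-sided Lipschitz estimate (the reverse bound $A_\omega^\varepsilon(\tau_2)\ge A_\omega^\varepsilon(\tau_1)-C(\tau_2-\tau_1)$ deserves the one extra line splitting the infimum over $s\le\tau_1$ and $s\in[\tau_1,\tau_2]$, using $A_\omega^\varepsilon(\tau_1)\le 0$), and the passage $\varepsilon\downarrow 0$ are all sound, and the symmetric sup-convolution handles $\lceil a\rceil$. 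What each buys: your single closed formula avoids choosing a grid and works for arbitrary $\varepsilon$ at one stroke; the paper's construction, though clunkier, produces a gauge that is $C^1$ on each open cell $(t_k,t_{k+1})$ of an explicit partition, and it is precisely this extra piecewise-$C^1$ structure (not the bare statement of the lemma) that is invoked later in the proof of Lemma \ref{Lem3.2}. So as a proof of the stated identity your argument is a valid and arguably cleaner alternative, but it would not by itself supply the regularity on a grid that the paper's downstream argument silently extracts from its own construction.
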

\begin{proof}
	The proof of this lemma follows from a proper modification of the proof of \cite[Lemma 2.2]{Salako19}. For the sake of completeness we give a proof here. We fix $0<\alpha<\lfloor a\rfloor$. It follows from  $\lceil a \rceil<\infty$ that there exists $T>0$ such that
	\begin{equation}\label{2.6d}
		\alpha<\frac{1}{T} \int_{s}^{s+T} a(\theta_{\tau}\omega)d\tau<2 \lceil a \rceil, \quad \forall s \in \mathbb{R}, \omega\in\Omega.
	\end{equation}
	Define
	\begin{equation*}
		A_{\omega}(t) =\int_{k T}^{t}\left(a(\theta_{\tau}\omega)-\alpha_{k}\right) d \tau, \quad \forall t \in[k T,(k+1) T],
	\end{equation*}
where
\begin{equation*}
	\alpha_{k} :=\frac{1}{T} \int_{k T}^{(k+1) T} a(\theta_{\tau}\omega) d\tau, \quad \forall k \in \mathbb{Z},\omega\in\Omega.
\end{equation*}
It is clear that $A_{\omega} \in W_{\text {loc }}^{1, \infty}(\mathbb{R}) \cap L^{\infty}(\mathbb{R})$ with
	\begin{equation}\label{2.7d}
		\alpha_{k}=a(\theta_{t}\omega)-A^{\prime}_{\omega}(t), \text { for } t \in(k T,(k+1) T),\omega\in\Omega.
	\end{equation}
On the one hand, by using \eqref{2.6d}, we deduce that
$$
\|A_{\omega}\|_{\infty} \leq 2 T\lceil a \rceil \text{ and }\alpha<\alpha_{k}, \forall k \in \mathbb{Z}, \omega\in\Omega.
$$
It follows from \eqref{2.7d} that
$$
\alpha \leq \sup _{A_{\omega} \in W_{\text {loc }}^{1, \infty}(\mathbb{R}) \cap L^{\infty}(\mathbb{R})} \operatorname{essinf}_{t \in \mathbb{R}}\left(a(\theta_{t}\omega)-A^{\prime}_{\omega}(t)\right), \omega\in\Omega.
$$
Since $\alpha$ is arbitrarily chosen less than $\lfloor a\rfloor$, we derive that
	$$
	\lfloor a\rfloor \leq \sup _{A_{\omega} \in W_{\mathrm{loc}}^{1, \infty}(\mathbb{R}) \cap L^{\infty}(\mathbb{R})} \operatorname{essinf}_{t \in \mathbb{R}}\left(a(\theta_{t}\omega)-A^{\prime}_{\omega}(t)\right), \omega\in\Omega.
	$$
	On the other hand, for each given $A_{\omega} \in W_{\text {loc }}^{1, \infty}(\mathbb{R}) \cap L^{\infty}(\mathbb{R})$ and $t>s$, we have that for $\omega\in\Omega$
	$$
	\begin{aligned}
		\frac{1}{t-s} \int_{s}^{t} a(\theta_{\tau}\omega)d \tau & \geq \operatorname{essinf}_{\tau \in \mathbb{R}}\left(a(\theta_{\tau}\omega)-A^{\prime}_{\omega}(\tau)\right)+\frac{(A_{\omega}(t)-A_{\omega}(s))}{t-s} \\
		& \geq \operatorname{essinf}_{\tau \in \mathbb{R}}\left(a(\theta_{\tau}\omega)-A^{\prime}_{\omega}(\tau)\right)-\frac{2\|A_{\omega}\|_{\infty}}{t-s}.
	\end{aligned}
	$$
	Hence
	$$
	\lfloor a\rfloor=\liminf _{t-s \rightarrow \infty} \frac{1}{t-s} \int_{s}^{t} a(\theta_{\tau}\omega) d \tau \geq \operatorname{essinf}_{\tau \in \mathbb{R}}\left(a(\theta_{\tau}\omega)-A^{\prime}_{\omega}(\tau)\right), \forall A_{\omega} \in W_{\mathrm{loc}}^{1, \infty}(\mathbb{R}) \cap L^{\infty}(\mathbb{R}),\omega\in\Omega.
	$$
	This completes the proof of the lemma.
\end{proof}

Next, we define the following two functions, which are strongly related to the main results of this work. We consider the function $L:\R\times\Omega\times[0, \sigma(\widetilde{J}^{\omega})) \rightarrow\R$ and $L(t;\cdot,\mu)\in L^{\infty}(\Omega,\mathcal{F},\mathbb{P})$ 
given by
\begin{equation}\label{2.2}
	L(t;\omega,\mu):=\int_{\mathbb{R}} J(\theta_{t}\omega, y) \mathrm{e}^{\mu y} \mathrm{~d} y,\quad \mu \in[0, \sigma(\widetilde{J}^{\omega})), \omega\in\Omega,
\end{equation}
and the function $C:\R\times\Omega\times(0, \sigma(\widetilde{J}^{\omega})) \rightarrow \R$ and 
$C\in L^{\infty}(\Omega,\mathcal{F},\mathbb{P})$ given by
\begin{equation}\label{2.3}
	C(t;\omega,\mu)=\int_{0}^{t}c(s;\omega,\mu)\ds, \quad \mu \in(0, \sigma(\widetilde{J}^{\omega})), \omega \in \Omega,
\end{equation}
where 
\begin{equation}\label{2.3c}
	c(t;\omega,\mu):=\mu^{-1}\left[\int_{\mathbb{R}} J(\theta_{t}\omega, y)\left(\mathrm{e}^{\mu y}-1\right) \mathrm{d} y+a(\theta_{t}\omega)\right], \mu \in(0, \sigma(\widetilde{J}^{\omega})), \omega \in \Omega.
\end{equation}
\begin{proposition}\label{Pro2.2}
	Let Assumption \ref{Ass1.2} be satisfied. Then the following properties hold:
	\begin{description}
		\item[(i)] The maps $L$ defined above in \eqref{2.2} is of class $C^{1}$ from $\R\times\Omega\times(0, \sigma(\widetilde{J}^{\omega}))$ into $\R$.
		\item[(ii)] Consider the sets
		$$
		M=\left\{\mu \in(0, \sigma(\widetilde{J}^{\omega})): \exists\; \mu^{\prime} \in(\mu,\sigma(\widetilde{J}^{\omega})), \forall k \in\left(\mu, \mu^{\prime}\right],\lfloor c(t;\omega,\mu)-c(t;\omega,k)\rfloor>0\right\},
		$$
		and
		$$
		\widetilde{M}=\left\{\mu \in(0, \sigma(\widetilde{J}^{\omega})): \exists\; \mu^{\prime} \in(\mu, \sigma(\widetilde{J}^{\omega})),\left\lfloor c(t;\omega,\mu)-c\left(t;\omega,\mu^{\prime}\right)\right\rfloor>0\right\} .
		$$
		Then one has $M=\widetilde{M}$ and there exists $\mu^{*} \in(0, \sigma(\widetilde{J}^{\omega})]$ such that
		$$
		M=\left(0, \mu^{*}\right).
		$$
		\item[(iii)] One also has:
		$$
		\left\lfloor-\frac{\mathrm{d} c(t;\omega,\mu)}{\mathrm{d} \mu}\right\rfloor>0, \forall \mu \in\left(0, \mu^{*}\right) \text { and }\left\lfloor-\frac{\mathrm{d} c\left(t;\omega,\mu^{*}\right)}{\mathrm{d} \mu}\right\rfloor=0 \text { if } \mu^{*}<\sigma(\widetilde{J}^{\omega}) .
		$$
		\item[(iv)] The function $\mu \mapsto\lfloor c(t;\omega,\mu)\rfloor$ is decreasing on $M$.
	\end{description} 
\end{proposition}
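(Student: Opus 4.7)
My plan is to handle (i) by standard differentiation under the integral, to identify a single pointwise quantity controlling $\partial_\mu c$, and to let this quantity drive (iii), (ii), and (iv) in turn. For (i), fixing $\omega$ and a compact $K \subset [0, \sigma(\widetilde{J}^\omega))$ and choosing $\mu_0 \in (\sup K, \sigma(\widetilde{J}^\omega))$, the bound $|y|^k e^{\mu y} \leq C_{k,K}\, e^{\mu_0 y}$ for $\mu \in K$ and $|y|$ large combined with Assumption \ref{Ass1.2}(iii) supplies an $L^1$-dominant; Lebesgue's theorem legitimises differentiation under the integral and yields $\partial_\mu L(t;\omega,\mu) = \int_\R y\, e^{\mu y} J(\theta_t\omega, y)\,\dy$, continuous in $(t,\mu)$. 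The $t$-regularity uses the standing continuity hypotheses on $t \mapsto a(\theta_t\omega)$ and $t \mapsto J(\theta_t\omega, \cdot)$.

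\medskip

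\textbf{Key pointwise structure.} I set $g(t;\omega,\mu) := \mu\, c(t;\omega,\mu) = L(t;\omega,\mu) - L(t;\omega,0) + a(\theta_t\omega)$. For each $(t,\omega)$, $g(t;\omega,\cdot)$ is convex in $\mu$ with $g(t;\omega,0) = a(\theta_t\omega) > 0$, and a direct computation gives
\[
-\partial_\mu c(t;\omega,\mu) = \frac{h(t;\omega,\mu)}{\mu^2}, \qquad h := g - \mu\,\partial_\mu g, \qquad \partial_\mu h = -\mu\,\partial_\mu^2 g \leq 0.
\]
Hence $\mu \mapsto h(t;\omega,\mu)$ is pointwise nonincreasing with $h(t;\omega,0^+) = a(\theta_t\omega)$, and for $0 < \mu < k$,
\[
h(t;\omega, k)\Big(\tfrac{1}{\mu}-\tfrac{1}{k}\Big) \leq c(t;\omega,\mu) - c(t;\omega,k) = \int_\mu^k \frac{h(t;\omega,\nu)}{\nu^2}\,\d\nu \leq h(t;\omega,\mu)\Big(\tfrac{1}{\mu}-\tfrac{1}{k}\Big).
\]
Pointwise monotonicity in $\mu$ passes to the least mean, so $\mu \mapsto \lfloor h(\cdot;\omega,\mu)\rfloor$ is nonincreasing, equal to $\lfloor a\rfloor(\omega) > 0$ at $\mu = 0^+$, and continuous in $\mu$ via a uniform-in-$t$ Lipschitz bound on $h(t;\omega,\cdot)$ coming from (i).

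\medskip

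\textbf{Deducing (iii), (ii), (iv).} Define $\mu^* := \sup\{\mu \in (0, \sigma(\widetilde{J}^\omega)) : \lfloor h(\cdot;\omega,\mu)\rfloor > 0\} \in (0, \sigma(\widetilde{J}^\omega)]$. Since $\lfloor -\partial_\mu c(\cdot;\omega,\mu)\rfloor = \mu^{-2}\lfloor h(\cdot;\omega,\mu)\rfloor$, statement (iii) is immediate from the monotonicity and continuity just shown. For (ii), $M \subset \widetilde{M}$ is trivial (take $k = \mu'$). Conversely, if $\mu \in \widetilde{M}$ with witness $\mu'$, the upper pointwise bound yields $\bigl(\tfrac{1}{\mu}-\tfrac{1}{\mu'}\bigr)\lfloor h(\cdot;\omega,\mu)\rfloor \geq \lfloor c(\cdot;\omega,\mu) - c(\cdot;\omega,\mu')\rfloor > 0$, so $\mu < \mu^*$; then for any $\mu'' \in (\mu, \mu^*)$ and $k \in (\mu, \mu'']$ the lower pointwise bound gives $\lfloor c(\cdot;\omega,\mu) - c(\cdot;\omega,k)\rfloor \geq \bigl(\tfrac{1}{\mu}-\tfrac{1}{k}\bigr)\lfloor h(\cdot;\omega,k)\rfloor > 0$, so $\mu \in M$. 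The same lower bound shows $(0, \mu^*) \subset M$, hence $M = \widetilde{M} = (0, \mu^*)$. Part (iv) follows from the superadditivity $\lfloor c(\cdot;\omega,\mu_1)\rfloor \geq \lfloor c(\cdot;\omega,\mu_2)\rfloor + \lfloor c(\cdot;\omega,\mu_1) - c(\cdot;\omega,\mu_2)\rfloor$ together with the strict positivity of the last term on $(0, \mu^*)$ just established.

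\medskip

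\textbf{Main obstacle.} The technical heart is the uniform regularity that lets the pointwise estimates transfer to the least mean. In particular, the continuity of $\mu \mapsto \lfloor h(\cdot;\omega,\mu)\rfloor$ at $\mu^*$ (and hence $\lfloor -\partial_\mu c(\cdot;\omega,\mu^*)\rfloor = 0$ when $\mu^* < \sigma(\widetilde{J}^\omega)$) requires a uniform Lipschitz bound $|h(t;\omega,\mu) - h(t;\omega,\mu')| \leq C(K)|\mu - \mu'|$ on every compact interval $K \subset (0, \sigma(\widetilde{J}^\omega))$, which in turn needs $y^2 e^{\mu_0 y}\widetilde{J}^\omega(y) \in L^1(\R; L^\infty(\Omega))$ for some $\mu_0 > \sup K$ --- precisely what Assumption \ref{Ass1.2}(iii) is designed to supply.
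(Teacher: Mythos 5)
Your argument is correct in substance and, unlike the paper, actually supplies a proof: the paper disposes of Proposition \ref{Pro2.2} with a one-line deferral to Ducrot and Jin \cite{Ducrot21}. The argument there rests on the convexity of $\mu\mapsto\mu c(t;\omega,\mu)$ and the monotonicity of its secant slopes; your tangent-intercept function $h=g-\mu\,\partial_\mu g$ (with $g=\mu c$) is the differential repackaging of exactly that convexity, and it is arguably the cleaner organizing device: the identity $-\partial_\mu c=h/\mu^2$ together with the pointwise monotonicity of $h$ in $\mu$ produces the two-sided bounds on $c(\cdot;\mu)-c(\cdot;k)$ from which $M=\widetilde M=(0,\mu^*)$, the sign statement in (iii), and the strict decrease in (iv) (via superadditivity of the least mean and its homogeneity under multiplication by positive constants) all follow in a few lines. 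The reduction $\lfloor-\partial_\mu c\rfloor=\mu^{-2}\lfloor h\rfloor$, the transfer of pointwise inequalities to the least mean, and the use of the uniform-in-$t$ Lipschitz bound to conclude $\lfloor h(\cdot;\omega,\mu^*)\rfloor=0$ when $\mu^*<\sigma(\widetilde{J}^{\omega})$ are all sound.

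Two small points to tighten. First, your dominating functions on the negative half-line are off: for $y<0$ and $\mu<\mu_0$ one has $e^{\mu y}\ge e^{\mu_0 y}$, so $|y|^k e^{\mu y}$ is \emph{not} dominated there by $C\,e^{\mu_0 y}$; the correct (and easy) dominant for $\mu$ in a compact $K\subset(0,\sigma(\widetilde{J}^{\omega}))$ is the constant $\sup_{s\le 0}|s|^k e^{\mu_{\min}s}<\infty$ with $\mu_{\min}=\min K>0$, multiplied by $\|J(\cdot,y)\|_{L^{\infty}(\Omega)}\in L^1(\R)$ from Assumption \ref{Ass1.2}(ii) --- which is also the reason the $C^1$ statement must stay away from $\mu=0$. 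Second, the step $\lfloor h(\cdot;\omega,0^+)\rfloor=\lfloor a\rfloor(\omega)>0$, which is what forces $\mu^*>0$, invokes the positivity of the least mean of $a$, i.e.\ (H1)/(H2); Proposition \ref{Pro2.2} as stated lists only Assumption \ref{Ass1.2}, so you should record explicitly that this standing hypothesis is being used. Neither point affects the correctness of the architecture.
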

\begin{proof}
	The proof is similar to the Ducrot and Jin \cite{Ducrot21}, so we omit it.
\end{proof}

\begin{proposition}[Comparison principle]\label{Prop2.4}
Let $t_{0} \in \mathbb{R}$ and $T>0$ be given. Let $J^{\omega}:\Omega \times \mathbb{R} \rightarrow[0, \infty)$  be a measurable kernel such that the map $\omega \mapsto \int_{\mathbb{R}} J(\theta_{t}\omega, y) \mathrm{d} y$ is bounded. Let $\underline{u}$ and $\bar{u}$ be two uniformly continuous functions defined from $\left[t_{0}, t_{0}+T\right] \times \mathbb{R}$ into the interval $[0,1]$ such that for each $x \in \mathbb{R}$, the maps $\underline{u}(\cdot, x)$ and $\bar{u}(\cdot, x)$ both belong to $W^{1,1}\left(t_{0}, t_{0}+T\right)$, satisfying $\underline{u}\left(t_{0}, \cdot\right) \leq \bar{u}\left(t_{0}, \cdot\right)$ and, for all $x \in \mathbb{R}$ and for almost every $t \in\left(t_{0}, t_{0}+T\right)$,
$$
\begin{aligned}
\partial_{t} \bar{u}(t, x) & \geq \int_{\mathbb{R}} J(\theta_{t}\omega, y)[\bar{u}(t, x-y)-\bar{u}(t, x)] \mathrm{d} y+a(\theta_{t}\omega)\bar{u}(t,x)(1-\bar{u}(t,x)), \\
\partial_{t} \underline{u}(t, x) & \leq \int_{\mathbb{R}} J(\theta_{t}\omega, y)[\underline{u}(t, x-y)-\underline{u}(t, x)] \mathrm{d} y+a(\theta_{t}\omega)\underline{u}(t,x)(1-\underline{u}(t,x)).
\end{aligned}
$$
Then $\underline{u} \leq \bar{u}$ on $\left[t_{0}, t_{0}+T\right] \times \mathbb{R}$.
\end{proposition}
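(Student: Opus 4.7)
The plan is to set $w := \bar u - \underline u$, which is non-negative at $t = t_0$, and to show $w \geq 0$ on $[t_0, t_0+T] \times \R$ via a Gronwall-type estimate on the negative part $w^- := \max(-w, 0)$. Subtracting the two differential inequalities and using the Fisher-KPP algebraic identity $\bar u(1-\bar u) - \underline u(1-\underline u) = (1-\bar u - \underline u)w$, one finds that for every $x \in \R$ and almost every $t \in (t_0, t_0+T)$,
\begin{equation*}
\partial_t w(t,x) \geq \int_{\R} J(\theta_t\omega, y)[w(t,x-y) - w(t,x)]\dy + b(t,x) w(t,x),
\end{equation*}
where $b(t,x) := a(\theta_t\omega)(1 - \bar u(t,x) - \underline u(t,x))$. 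The coefficient $b$ is bounded on $[t_0, t_0+T] \times \R$ by $K_a := \|a(\theta_\cdot\omega)\|_{L^\infty(t_0, t_0+T)}$, which is finite by the continuity of $a(\theta_\cdot\omega)$ on the compact interval, and by hypothesis there is a constant $K_J$ with $\int_{\R} J(\theta_t\omega, y)\dy \leq K_J$ for every $t$.

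Next, I would introduce the Lipschitz function $F(s) := \max(-s, 0)$, for which $F'(s) = -\mathbf{1}_{\{s<0\}}$ almost everywhere. Since each slice $w(\cdot, x)$ lies in $W^{1,1}(t_0, t_0+T)$, the Sobolev chain rule yields $F(w(\cdot, x)) \in W^{1,1}(t_0, t_0+T)$ with $\partial_t F(w) = F'(w)\partial_t w$ almost everywhere; the ambiguity of $F'$ at $0$ is harmless, since $\partial_t w = 0$ a.e.\ on $\{w = 0\}$ by a standard property of Sobolev functions. On the set $\{w < 0\}$ one has $w = -F(w)$, and combining the elementary bound $-w(t,x-y) \leq F(w(t,x-y))$ with the inequality above gives, after multiplying by $-1$,
\begin{equation*}
\partial_t F(w(t,x)) \leq \int_{\R} J(\theta_t\omega, y) F(w(t, x-y))\dy + (K_J + K_a) F(w(t,x)),
\end{equation*}
while on $\{w > 0\}$ the inequality is trivial because its left-hand side vanishes.

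Integrating in $t$ from $t_0$ and using that $F(w(t_0, \cdot)) \equiv 0$, I would then set $M(t) := \sup_{x \in \R} F(w(t,x))$, which is continuous in $t$ thanks to the uniform continuity of $w$ and finite because $w$ is bounded. Taking the supremum over $x$ and bounding each integrand by $M(s)$ uniformly in $x$ yields
\begin{equation*}
M(t) \leq (2K_J + K_a) \int_{t_0}^t M(s)\ds, \quad t \in [t_0, t_0+T].
\end{equation*}
Since $M(t_0) = 0$, Gronwall's inequality forces $M \equiv 0$, i.e., $w \geq 0$ throughout $[t_0, t_0+T] \times \R$, which is the desired conclusion. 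The principal technical point is making the Sobolev chain rule rigorous and confirming that the behavior on the set $\{w = 0\}$ is harmless; once that is done, the integral manipulation and Gronwall step are routine.
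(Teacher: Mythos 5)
Your argument is correct, and it is essentially the standard proof of this comparison principle: the paper itself omits the proof and defers to Ducrot--Jin, whose argument is of the same type (Gronwall on $\sup_x(\underline{u}-\bar u)^+(t,\cdot)$ after the KPP factorization $\bar u(1-\bar u)-\underline u(1-\underline u)=(1-\bar u-\underline u)w$). The technical points you flag --- the Sobolev chain rule for $s\mapsto s^-$ on the one-dimensional slices, the vanishing of $\partial_t w$ a.e.\ on $\{w=0\}$, and the continuity of $M(t)=\sup_x F(w(t,x))$ from the uniform continuity of $\bar u,\underline u$ --- are exactly the ones that need checking, and you handle them correctly.
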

\begin{proof}
	The proof is similar to the Ducrot and Jin \cite{Ducrot21}, so we omit it.
\end{proof}
\begin{lemma}[\cite{Shen04}]\label{lem2.5d}
Let $\left(\Omega,\left\{\theta_{t}\right\}_{t \in \mathbb{R}}\right)$ be ergodic and $h \in$ $L^{1}(\Omega, \mathcal{F}, \mathbb{P})$ ($h$ is real-valued), then there is $\Omega_{0} \in \mathcal{F}$ with $\mathbb{P}\left(\Omega_{0}\right)=1$ such that
	$$
	\lim _{t \rightarrow \infty} \frac{1}{t} \int_{0}^{t} h\left(\theta_{s} \omega\right) \mathrm{d} s=\int_{\Omega} h(\omega) \mathrm{d} \mathbb{P}(\omega)
	$$
	for all $\omega \in \Omega_{0}$.
\end{lemma}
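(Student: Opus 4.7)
The statement is the continuous-time Birkhoff pointwise ergodic theorem for an ergodic measure-preserving flow, so the plan is to follow the standard three-step argument: establish almost-everywhere convergence of the averages, show that the limit is flow-invariant, and then use ergodicity together with Fubini to identify the limit with the spatial mean. Set $M_t h(\omega) := \frac{1}{t}\int_0^t h(\theta_s\omega)\,\d s$ for $t>0$ and $\omega \in \Omega$.

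First, I would prove the existence of $\lim_{t\to\infty} M_t h(\omega)$ for a.e.\ $\omega$. Since $h \in L^1$, one combines a continuous-time version of the maximal ergodic inequality
\[
\mathbb{P}\Bigl(\sup_{t>0} |M_t h| > \lambda\Bigr) \le \frac{\|h\|_{L^1}}{\lambda}, \quad \lambda>0,
\]
with an $L^1$-density argument: for bounded $h$ a.e.\ convergence follows by reducing the continuous flow to the time-one map (using uniform continuity of $s \mapsto \int_0^s h(\theta_\tau\omega)\,\d\tau$ on bounded intervals) and invoking the discrete Birkhoff theorem; for general $h \in L^1$, approximate by bounded functions, apply the maximal inequality to the difference, and conclude that the limit exists off a set of measure zero. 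Denote this limit by $\bar h(\omega)$ on the full-measure set $\Omega_1$ where it exists. Second, I would verify that $\bar h$ is $\theta_r$-invariant for every $r \in \mathbb{R}$: the splitting
\[
M_t h(\theta_r\omega) = \frac{t+r}{t}\, M_{t+r} h(\omega) - \frac{1}{t}\int_0^{r} h(\theta_s\omega)\,\d s
\]
shows the first term tends to $\bar h(\omega)$ while the second vanishes as $t\to\infty$, so $\bar h(\theta_r\omega) = \bar h(\omega)$ on $\Omega_1 \cap \theta_{-r}\Omega_1$.

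Third, since $\{\theta_t\}$ is ergodic, every flow-invariant measurable function is constant $\mathbb{P}$-a.s., so $\bar h \equiv c$ for some $c \in \mathbb{R}$ on a full-measure set $\Omega_0 \subset \Omega_1$. To identify $c$, integrate: by the maximal inequality applied to $|h|$ together with Fatou's lemma one can justify dominated convergence, and the measure-preserving property of $\theta_s$ gives
\[
c = \int_\Omega \bar h(\omega)\,\d\mathbb{P}(\omega) = \lim_{t\to\infty}\frac{1}{t}\int_0^t\!\int_\Omega h(\theta_s\omega)\,\d\mathbb{P}(\omega)\,\d s = \int_\Omega h(\omega)\,\d\mathbb{P}(\omega),
\]
which yields the claimed equality on $\Omega_0$. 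The main obstacle in a from-scratch development is the first step: proving the maximal ergodic inequality (typically via a covering/rearrangement lemma of Vitali type, or via a Hopf-style ``sunrise'' argument on the primitive $s \mapsto \int_0^s h(\theta_\tau\omega)\,\d\tau$) and then bootstrapping from bounded $h$ to $L^1$. Since the lemma is cited directly from \cite{Shen04}, the appropriate course of action in the paper is to defer to that reference rather than reproduce the classical proof.
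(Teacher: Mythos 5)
The paper does not prove this lemma at all: it is imported verbatim from the reference \cite{Shen04} (it is the continuous-time Birkhoff pointwise ergodic theorem), so there is no in-paper argument to compare against, and your closing remark that the right move is to defer to the citation matches what the authors actually do. Your outline of the classical proof is essentially sound: weak-type maximal inequality plus $L^1$-density for a.e.\ existence of the limit, the shift identity for flow-invariance of $\bar h$, and ergodicity to force $\bar h$ to be a.s.\ constant. Two small points are worth tightening if you were to write it out. First, when you reduce to the time-one map you should note that ergodicity of the flow does \emph{not} give ergodicity of $\theta_1$; your argument survives because you only use the discrete theorem for \emph{existence} of the limit and derive constancy separately from flow-invariance, but this should be said explicitly. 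Second, the identification $c=\int_\Omega h\,\d\mathbb{P}$ cannot be justified by dominated convergence via the maximal inequality: for $h\in L^1$ the maximal function $\sup_{t>0}M_t|h|$ is only weak-$(1,1)$ and is in general not integrable, so it is not a valid dominating function. The standard fix is to use $L^1$-convergence of the averages (uniform integrability of $\{M_t h\}_{t\ge 1}$, or the mean ergodic theorem, or Scheff\'e's lemma after establishing a.e.\ convergence together with $\|M_t h\|_{L^1}\le\|h\|_{L^1}$ and $\int M_t h\,\d\mathbb{P}=\int h\,\d\mathbb{P}$), which then passes the limit through the integral.
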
 
Using the above notation, our next result ensures the existence of random transition front for system \eqref{1.1} with the speed function $c(t;\omega,\mu)$, for each $\mu \in\left(0, \mu^{*}\right)$.
\begin{theorem}\label{Th2.3}
	Assume that (H1) holds and for each $\mu \in\left(0, \mu^{*}\right)$,  where $\mu^{*}$ is defined in Proposition \ref{Pro2.2}. 
\item {(1)}
Problem \eqref{1.1} possesses a monotone random transition wave solution $u(t, x;\omega)$
$=U^{\mu}\Big(x-C(t ; \omega, \mu),$$ \theta_{t} \omega\Big)$ with $C(t ; \omega, \mu)=\int_{0}^{t} c(s ; \omega, \mu) d s$, where 
$c(t ; \omega, \mu)$ is defined in \eqref{2.3c}.
Moreover, for any $\omega \in \Omega_{0}$, 
\begin{equation}
	\lim _{x \rightarrow \infty} \sup _{t \in \mathbb{R}}\left|\frac{U^{\mu}\left(x, \theta_{t} \omega\right)}{e^{-\mu x}}-1\right|=0 \text{ and }\lim _{x \rightarrow-\infty} \sup _{t \in \mathbb{R}}\left|U^{\mu}\left(x, \theta_{t} \omega\right)-1\right|=0.
\end{equation}
\item {(2)} There exist $c^*\in\R$ and $U^{\mu}_{*}(\cdot)\in C_{unif}^{b}(\R;\R)$ such that for a.e. $\omega\in\Omega$,
\begin{equation}\label{2.10d}
	\lim_{t\rightarrow\infty}\dfrac{C(t;\omega,\mu)}{t}=c^{*},
\end{equation}
\begin{equation}\label{2.11d}
	\lim_{t\to\infty}\dfrac{1}{t}\int_{0}^{t}U^{\mu}(x,\theta_{s}\omega)\ds=U^{\mu}_{*}(x),\quad\forall x\in\R.
\end{equation}
\end{theorem}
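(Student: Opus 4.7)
\emph{Part (1): setup and upper solution.} The first move is the change of variables $u(t,x;\omega) = v(t, x - C(t;\omega,\mu);\omega)$, which reduces Definition \ref{def-wave} to the problem of finding an entire solution of \eqref{3.1b} (with $c(t;\omega) = c(t;\omega,\mu)$) satisfying $\lim_{x\to -\infty}v = 1$ and $\lim_{x\to+\infty} v(t,x)/e^{-\mu x} = 1$ uniformly in $t$. Plugging $\phi(x) = e^{-\mu x}$ into the linearized version of \eqref{3.1b} and using the very definition of $c(t;\omega,\mu)$ in \eqref{2.3c} gives equality, so $\phi$ is an exact solution of the linearization. Since the KPP term $a(\theta_t\omega) v(1-v)$ is dominated by $a(\theta_t\omega) v$, the truncation $\bar v(t,x) := \min\{1, e^{-\mu x}\}$ is an upper solution of \eqref{3.1b} for a.e.\ $\omega \in \Omega_{0}$.

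\emph{Part (1): the lower solution.} This is the crux. I would try the ansatz
\begin{equation*}
\underline v(t,x;\omega) = \max\Bigl\{0,\; e^{-\mu x + A_\omega(t)} - B\, e^{-(\mu+\eta) x + \widetilde A_\omega(t)}\Bigr\},
\end{equation*}
for small $\eta > 0$ with $\mu + \eta < \mu^{*}$, a large constant $B = B(\omega) > 0$, and bounded Lipschitz correctors $A_\omega, \widetilde A_\omega$. Proposition \ref{Pro2.2}(iv) asserts $\lfloor c(\cdot;\omega,\mu) - c(\cdot;\omega,\mu+\eta)\rfloor > 0$, and Lemma \ref{Lem2.1} realises this least-mean positivity as an essential infimum: one can select $A_\omega, \widetilde A_\omega \in W^{1,\infty}_{\mathrm{loc}}(\R) \cap L^\infty(\R)$ so that $\bigl(c(t;\omega,\mu) - A'_\omega(t)\bigr) - \bigl(c(t;\omega,\mu+\eta) - \widetilde A'_\omega(t)\bigr) \geq \alpha > 0$ for a.e.\ $t$. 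Assumption (H1) then lets the quadratic KPP correction $a(\theta_t\omega)\underline v(1-\underline v)$ be absorbed by the gap $\alpha$ on the support of $\underline v$ (essentially $x \gtrsim \eta^{-1}\log B$), verifying the subsolution inequality.

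\emph{Part (1): limit construction and asymptotics.} Let $v_n(t,x;\omega)$ solve \eqref{3.1b} on $[-n,\infty)$ with initial datum $v_n(-n,\cdot;\omega) = \underline v(-n,\cdot;\omega)$. Proposition \ref{Prop2.4}, pulled back through the change of variables, yields $\underline v \leq v_n \leq \bar v$, and iterating the comparison gives monotonicity $v_{n+1} \geq v_n$. The pointwise monotone limit $v(t,x;\omega)$ is an entire solution of \eqref{3.1b}; uniqueness of the Cauchy problem together with the cocycle property arranged for $A_\omega, \widetilde A_\omega$ (namely $A_{\theta_s\omega}(t) = A_\omega(t+s)$, and likewise for $\widetilde A$) produces $v(t,x;\omega) = v(0,x;\theta_t\omega) =: U^\mu(x,\theta_t\omega)$, giving the representation \eqref{1.2}; measurability in $\omega$ is inherited from the approximants. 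The squeeze $\underline v \leq U^\mu(\cdot,\theta_t\omega) \leq \bar v$ gives $\sup_t|U^\mu(x,\theta_t\omega)\,e^{\mu x} - 1| = O(e^{-\eta x})$ as $x \to +\infty$; the limit at $-\infty$ follows by a standard tail argument using the KPP nonlinearity $au(1-u)$ together with the uniform positivity of $\lfloor a \rfloor$ to force $U^\mu \to 1$, while strict monotonicity in $x$ is preserved by choosing $\underline v$ strictly decreasing on its support.

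\emph{Part (2) and the main obstacle.} Since $c(0;\cdot,\mu) \in L^\infty(\Omega,\mathcal{F},\mathbb{P}) \subset L^1(\Omega,\mathcal{F},\mathbb{P})$ and $0 \le U^\mu(x,\cdot) \le 1$ lies in $L^\infty(\Omega)$ for each fixed $x$, Birkhoff's ergodic theorem (Lemma \ref{lem2.5d}) immediately delivers \eqref{2.10d} with $c^{*} := \int_\Omega c(0;\omega,\mu)\,d\mathbb{P}(\omega)$ and \eqref{2.11d} with $U^\mu_{*}(x) := \int_\Omega U^\mu(x,\omega)\,d\mathbb{P}(\omega)$; the countability of $\mathbb{Q}$ combined with the $x$-uniform continuity of $U^\mu(\cdot,\omega)$ in $x$ (inherited from the regularity of $\bar v$ and the semiflow) upgrades the exceptional set to one independent of $x$, and $U^\mu_{*} \in C_{\mathrm{unif}}^{b}(\R;\R)$ follows by dominated convergence. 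The principal difficulty throughout is the lower solution in Part (1): in the nonautonomous setting the spectral gap $c(\cdot;\mu) - c(\cdot;\mu+\eta)$ is positive only on average and may be negative on arbitrarily long intervals, so the classical autonomous ansatz $\phi - B\phi_\eta$ fails pointwise; the modulation by the Lemma \ref{Lem2.1} correctors $A_\omega, \widetilde A_\omega$ together with the full strength of (H1) (rather than only the averaged (H2)) is precisely what rescues the pointwise subsolution inequality.
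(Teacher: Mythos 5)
Your overall strategy (explicit exponential super-solution, a corrector-modulated two-exponential sub-solution built from Proposition \ref{Pro2.2} and Lemma \ref{Lem2.1}, a limiting procedure via comparison, and Birkhoff's theorem for part (2)) is the same as the paper's, and your treatment of part (2) is essentially identical (the paper invokes the subadditive ergodic theorem where you correctly note plain Birkhoff suffices, since $C(t;\omega,\mu)$ is an additive cocycle of the bounded function $c(0;\cdot,\mu)$). However, there are two genuine gaps in part (1). First, your route to the profile representation $v(t,x;\omega)=U^{\mu}(x,\theta_{t}\omega)$ rests on arranging the equivariance $A_{\theta_{s}\omega}(t)=A_{\omega}(t+s)$ for the correctors. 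The correctors produced by Lemma \ref{Lem2.1} are built from a fixed partition $\{kT\}_{k\in\mathbb{Z}}$ of the time axis (with $\alpha_{k}$ the average of $a(\theta_{\tau}\omega)$ over $[kT,(k+1)T]$), and this anchoring destroys equivariance except for shifts $s\in T\mathbb{Z}$; moreover, your increasing limit from the sub-solution is the \emph{minimal} entire solution above $\underline v$ and therefore a priori depends on the (non-equivariant) choice of $A_{\omega}$. The paper sidesteps this entirely by launching the Cauchy problems from the $\omega$-independent super-solution $\phi_{+}^{\mu}$, extracting a limit by Arzel\`a--Ascoli via the Lipschitz bounds of Lemma \ref{Lem4.1}, and then obtaining $U^{\mu}(x,\omega)=\lim_{t\to\infty}u(t,x+C(t;\theta_{-t}\omega,\mu);\phi_{+}^{\mu},\theta_{-t}\omega)$ as a monotone-in-$t$ limit, after which the identity $u(t,x+C(t;\omega,\mu);U^{\mu}(\cdot,\omega),\omega)=U^{\mu}(x,\theta_{t}\omega)$ follows from the cocycle property of $C$ alone. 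You would need to either switch to this super-solution-based limit or supply a genuinely equivariant corrector.

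Second, the statement $\lim_{x\to-\infty}\sup_{t}|U^{\mu}(x,\theta_{t}\omega)-1|=0$, uniformly in $t$, is not a ``standard tail argument'' here: the sub-solution only gives a positive lower bound $\Theta>0$ for $\liminf$ as $x\to-\infty$, uniform in $t$, and upgrading $\Theta$ to $1$ is the content of Lemma \ref{Lem4.3}. Because the kernel $J(\theta_{t}\omega,\cdot)$ is merely measurable and bounded in $t$, passing to the limit along sequences $t_{n}$, $x_{n}\to-\infty$ requires the Dunford--Pettis weak-$L^{1}$ compactness argument for the shifted kernels (Claim \ref{Cla4.4}) and a weak-$\star$ limit for $a(\theta_{t+t_{n}}\omega)(1-u_{n})$, after which an ODE comparison $U'=\inf_{t}a\cdot U(1-U)$ forces $\Theta=1$. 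Your sketch omits this entirely, and it is the technically hardest step of the existence proof; (H1) enters precisely to make the ODE comparison conclusive. The remaining deviations (two correctors instead of one, $\max\{0,\cdot\}$ truncation instead of the constant extension below the maximum point, the implicit requirement $\eta<\mu$ so that the quadratic term $a\,e^{-2\mu x}$ is absorbed by $e^{-(\mu+\eta)x}$ for large $x$) are cosmetic and would work.
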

Next, we study the stability of random transition fronts of \eqref{1.1}. 
\begin{theorem}\label{Th2.4}
	Assume that (H2) hold and $ c(t; \omega, \mu)$ is defined in \eqref{2.3c}. Then for given $\mu \in\left(0, \mu^{*}\right)$, the random wave solution $u(t, x)=U\left(x-C(t ; \omega, \mu), \theta_{t} \omega\right)$ with 
	$$
	\lim _{x \rightarrow \infty} \frac{U\left(x ; \theta_{t} \omega\right)}{e^{-\mu x}}=1 \text{ and } C(t ; \omega, \mu)=\int_{0}^{t} c(s ; \omega, \mu)\ds,
	$$
	is asymptotically stable, that is, for any $\omega \in$ $\Omega_{0}$ and $u_{0} \in C_{\mathrm{unif}}^{b}(\mathbb{R})$ satisfying that
	\begin{equation}\label{2.4}
		\inf _{x \leq x_{0}} u_{0}(x)>0, \quad \forall x_{0} \in \mathbb{R}, \quad \lim _{x \rightarrow \infty} \frac{u_{0}(x)}{U(x-C(0 ; \omega, \mu), \omega)}=1,
	\end{equation}
	there holds
	$$
	\lim _{t \rightarrow \infty}\left\|\frac{u\left(t, \cdot ; u_{0}, \omega\right)}{U\left(\cdot-C(t ; \omega, \mu), \theta_{t} \omega\right)}-1\right\|_{\infty}=0.
	$$
\end{theorem}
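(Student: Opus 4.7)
I plan to recast the problem in the moving frame of the wave and then sandwich the solution between a super- and a sub-solution that both converge to $U$ as $t\to\infty$. Setting $w(t,x):=u(t,x+C(t;\omega,\mu);u_{0},\omega)$, equation \eqref{3.1b} shows that $w$ satisfies
\begin{equation*}
\partial_{t}w=c(t;\omega,\mu)\partial_{x}w+\int_{\R}J(\theta_{t}\omega,y)[w(t,x-y)-w(t,x)]\dy+a(\theta_{t}\omega)w(1-w),
\end{equation*}
of which $U(x,\theta_{t}\omega)$ is itself a solution, so the statement to be proved is equivalent to $\|w(t,\cdot)/U(\cdot,\theta_{t}\omega)-1\|_{\infty}\to 0$ as $t\to\infty$.

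\textbf{Initial sandwich.} Using \eqref{2.4} together with the uniform tail expansion $U(x,\theta_{t}\omega)/e^{-\mu x}\to 1$ (Theorem \ref{Th2.3}(1)) and a hair-trigger argument (which propagates the uniform lower bound $\inf_{x\le x_{0}}u_{0}>0$ via small compactly supported sub-solutions built from translates of $U$), I would first produce, for each small $\varepsilon_{0}\in(0,1)$, constants $T_{0}>0$ and $\eta_{0}\in(0,\varepsilon_{0}/\mu)$ such that
\begin{equation*}
(1-\varepsilon_{0})U(x-\eta_{0},\theta_{T_{0}}\omega)\le w(T_{0},x)\le(1+\varepsilon_{0})U(x+\eta_{0},\theta_{T_{0}}\omega),\qquad\forall x\in\R.
\end{equation*}
The upper side is routine (near $+\infty$ from \eqref{2.4} and the tail expansion; elsewhere from $w\le 1$), while the lower side uses the spreading of $w$ to values near $1$ on the left after waiting time $T_{0}$.

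\textbf{Squeezing.} I would then propagate the sandwich with the ansatz
\begin{equation*}
\overline{w}(t,x)=(1+\varepsilon(t))U(x+\eta(t),\theta_{t}\omega),\qquad \underline{w}(t,x)=(1-\varepsilon(t))U(x-\eta(t),\theta_{t}\omega),
\end{equation*}
where $\varepsilon(t),\eta(t)>0$ are both decreasing and tend to $0$, with
\begin{equation*}
\varepsilon(t)=\varepsilon_{0}\exp\Big(-\lambda\int_{T_{0}}^{t}a(\theta_{s}\omega)\ds\Big),\qquad -\eta^{\prime}(t)=\frac{\lambda}{\mu}\cdot\frac{a(\theta_{t}\omega)\varepsilon(t)}{1+\varepsilon(t)},
\end{equation*}
for a small $\lambda>0$ and $\eta_{0}$ tuned so that $\eta(\infty)=0$. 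Since $U(\cdot,\theta_{t}\omega)$ itself solves the wave equation, direct substitution reduces the super-solution inequality for $\overline{w}$ to
\begin{equation*}
\lambda U(\tilde{x},\theta_{t}\omega)\le\frac{\lambda}{\mu}\bigl(-\partial_{x}U(\tilde{x},\theta_{t}\omega)\bigr)+(1+\varepsilon(t))U(\tilde{x},\theta_{t}\omega)^{2},\qquad \tilde{x}:=x+\eta(t),
\end{equation*}
for all $x\in\R$, $t\ge T_{0}$, $\omega\in\Omega_{0}$. This is verified regime by regime: as $\tilde{x}\to+\infty$, $-\partial_{x}U/U\to\mu$ uniformly in $t$ and the dominant terms on both sides match; as $\tilde{x}\to-\infty$, $U\to 1$ and the inequality reduces to $\lambda\le 1+\varepsilon(t)$; on compact $\tilde{x}$-windows the right-hand side is continuous and uniformly positive, so small enough $\lambda$ makes the inequality hold throughout. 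The sub-solution check for $\underline{w}$ is entirely parallel, the concavity of the KPP nonlinearity providing the favorable sign of the quadratic term $-a\varepsilon(1-\varepsilon)U^{2}$.

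\textbf{Conclusion and main obstacle.} Hypothesis (H2) and \eqref{1.6} deliver $\int_{T_{0}}^{t}a(\theta_{s}\omega)\ds\ge\tfrac{1}{2}\lfloor a\rfloor(t-T_{0})$ for all $t$ large enough and $\omega\in\Omega_{0}$, which forces $\varepsilon(t)\to 0$ exponentially; integrability of $a\varepsilon$ then yields $\eta(t)\to 0$. Proposition \ref{Prop2.4} gives $\underline{w}(t,\cdot)\le w(t,\cdot)\le\overline{w}(t,\cdot)$ on $[T_{0},\infty)\times\R$, and dividing by $U(x,\theta_{t}\omega)$ produces
\begin{equation*}
(1-\varepsilon(t))\frac{U(x-\eta(t),\theta_{t}\omega)}{U(x,\theta_{t}\omega)}\le\frac{w(t,x)}{U(x,\theta_{t}\omega)}\le(1+\varepsilon(t))\frac{U(x+\eta(t),\theta_{t}\omega)}{U(x,\theta_{t}\omega)}.
\end{equation*}
The principal obstacle, and the step I expect to be the hardest, is showing that the outer ratios tend to $1$ \emph{uniformly in} $x\in\R$: on the tails this follows from the asymptotics of Theorem \ref{Th2.3}(1) and from $U\to 1$ at $-\infty$, but on intermediate $x$-windows one must translate the vanishing shift $\eta(t)\to 0$ into a uniformly small change in the wave profile. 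This requires a uniform-in-$(t,\omega\in\Omega_{0})$ modulus of continuity for $\log U(\cdot,\theta_{t}\omega)$, which I would derive from the strict monotonicity $\partial_{x}U<0$, a Harnack-type estimate for the nonlocal equation \eqref{3.1b} giving uniform positivity of $U$ on compact sets, and the uniform exponential decay at $+\infty$ from Theorem \ref{Th2.3}(1).
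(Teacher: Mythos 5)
Your overall architecture (move to the wave frame, sandwich between perturbed profiles, drive the perturbation to zero using $\int a(\theta_s\omega)\,\ds=\infty$) is in the right spirit, but the core squeezing step contains a gap that I believe is fatal in the monostable setting, and it is not the step you flagged as hardest. Your super-solution verification reduces to
$\lambda U(\tilde{x},\theta_t\omega)\le\frac{\lambda}{\mu}\bigl(-\partial_xU(\tilde{x},\theta_t\omega)\bigr)+(1+\varepsilon(t))U(\tilde{x},\theta_t\omega)^2$,
and you dismiss the regime $\tilde{x}\to+\infty$ by saying the dominant terms match. Matching at leading order is exactly the problem: the sign of the inequality is then decided by subleading terms. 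The helpful term $(1+\varepsilon)U^2$ is $O(e^{-2\mu\tilde{x}})$, while the error $\lambda\bigl(U-\frac{1}{\mu}(-\partial_xU)\bigr)$ is, according to the only tail information available (the bounds $e^{-\mu x}-d\,e^{(\tilde{\mu}/\mu-1)A_\omega(t)-\tilde{\mu}x}\le U\le e^{-\mu x}$ from Lemmas \ref{Lem3.1}--\ref{Lem3.2}), generically of order $e^{-\tilde{\mu}\tilde{x}}$ with $\tilde{\mu}<2\mu$, hence much larger than $U^2$; if its sign is unfavorable the inequality fails for all large $\tilde{x}$, and no choice of small $\lambda$ repairs it since $\lambda$ multiplies both sides of the critical comparison. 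You would need $-\partial_xU=\mu U+O(U^2)$, a second-order tail asymptotic that is neither proved in the paper nor plausible from the construction; indeed the paper only establishes Lipschitz continuity of the profile, so even the uniform limit $-\partial_xU/U\to\mu$ that you invoke is an unproven ingredient. This is the well-known reason why Fife--McLeod shift-squeezing works for bistable/ignition fronts (where $f'(0)<0$ stabilizes the tail) but not for KPP fronts, where $u=0$ is unstable and stability must be extracted from exact tail matching.

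The paper's proof circumvents precisely this obstruction in two moves. First, Lemma \ref{Lem5.1} propagates the exact tail: it sandwiches the solution between $(1\pm\varepsilon)e^{-\mu x}\mp d\,e^{A_\omega(t)-\tilde{\mu}x}$, explicit functions for which the sub/super-solution inequalities can be checked because the problematic correction $d\,e^{-\tilde{\mu}x}$ is built into the comparison function rather than needing to be dominated by a quadratic term. Second, instead of a spatial shift it uses a purely multiplicative envelope $\frac{1}{\alpha(t)}U\le u\le\alpha(t)U$, where $\alpha(t)$ is nonincreasing; the strict decay $\alpha((k+1)T)\le e^{-\delta\int_{kT}^{(k+1)T}a}\,\alpha(kT)$ is forced on the left region $x\le x_\alpha$, where $U\ge m_\alpha>0$ makes the KPP quadratic term effective, while the right region $x\ge x_\alpha$ is controlled directly by the tail asymptotics of Lemma \ref{Lem5.1}. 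If you want to keep your route, you would have to either prove the second-order tail expansion of $U$ (not available here) or replace the shifted profiles by the paper's explicit exponential barriers near $+\infty$; at that point you have essentially reconstructed the paper's argument.
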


\section{Existence of Random Transition Fronts} 
In this section, we study the existence of random transition fronts of \eqref{1.1}, see Theorem \ref{Th2.3}. We divide the proof of Theorem \ref{Th2.3} into two steps: (1). we give some lemmas to construct the upper and lower solutions of \eqref{1.1}; (2). we construct the limit behavior to complete the proof of the theorem. 

In this section, we shall always suppose that (H1) holds.
Let $\Omega_{0}$ be as in \eqref{1.6}. Therefore, we have
$$
0<\inf_{t\in\R} a^{\omega}(t)\leq\sup_{t\in\R} a^{\omega}(t)<\infty, \text{ for a.e. }\omega\in\Omega_0.
$$

Recall that $u(t, x;\omega)=v(t, x-C(t ; \omega))$ with $C(t ; \omega)$ being differential in $t$ solves \eqref{1.1} if and only if $v(t, x)$ satisfies \eqref{3.1b}.
Hence, to prove the existence of random transition front of \eqref{1.1} of the form $u(t, x;\omega)=U\left(x-C(t; \omega), \theta_{t} \omega\right)$ for some differentiable $C(t ; \omega)$ and some $U(x, \omega)$ which is measurable in $\omega$ and 
$$
U\left(-\infty, \theta_{t} \omega\right)=1 \text{ and }U\left(\infty ; \theta_{t} \omega\right)=0, \text{ uniformly in }t,
$$
it is equivalent to prove the existence of entire solutions of \eqref{3.1b} (with $c(t ; \omega)=C^{\prime}(t ; \omega)$ ) of the form $v(t, x)=V(t, x ; \omega)$ such that
$$
\left\{\begin{array}{l}
	\omega \mapsto V(t, x ; \omega), \quad \omega \mapsto C(t ; \omega) \text { are measurable, } \\
	V(t, x ; \omega)=V\left(0, x ; \theta_{t} \omega\right), \\
	\lim _{x \rightarrow-\infty} V(t, x ; \omega)=1 \quad \text { and } \lim _{x \rightarrow \infty} V(t, x ; \omega)=0 \text { uniformly in } t .
\end{array}\right.
$$

\subsection{Construction of sub and super solutions}
In this section, we give some lemmas to construct upper and lower solutions of \eqref{3.1b}.

\begin{lemma}\label{Lem3.1}
  Suppose that (H1) holds. Let $\omega \in \Omega_{0}$ and $0<\mu<\mu^{*}$. Let
	$$
	\phi_{+}^{\mu}(x)=\min \left\{1, \phi^{\mu}(x)\right\},
	$$
	where $\phi^{\mu}(x)=\ee^{-\mu x}.$
	Then
	$$
	v\left(t, x ; \phi_{+}^{\mu}(\cdot),\omega\right) \leq \phi_{+}^{\mu}(x) \quad \forall t>0, x \in \mathbb{R},\omega\in\Omega_0.
	$$
\end{lemma}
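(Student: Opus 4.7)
\textbf{Proof plan for Lemma \ref{Lem3.1}.} My plan is to verify that the time-independent function $\phi_+^\mu$ is a stationary supersolution of \eqref{3.1b} and then invoke the comparison principle (Proposition \ref{Prop2.4}) after undoing the moving-frame change of variables $u(t,x) = v(t, x - C(t;\omega,\mu))$ that links \eqref{1.1} and \eqref{3.1b}. Since $\phi_+^\mu$ does not depend on $t$, the supersolution condition reduces, for a.e.\ $x \in \R$ and every $t \in \R$, to
\[
c(t;\omega,\mu)(\phi_+^\mu)'(x) + \int_{\R} J(\theta_t\omega, y)\bigl[\phi_+^\mu(x-y) - \phi_+^\mu(x)\bigr]\dy + a(\theta_t\omega)\phi_+^\mu(x)\bigl(1-\phi_+^\mu(x)\bigr) \leq 0.
\]

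I would split into the two smooth regions. For $x > 0$ one has $\phi_+^\mu(x) = e^{-\mu x}$ and $(\phi_+^\mu)'(x) = -\mu e^{-\mu x}$; the pointwise bound $\phi_+^\mu(x-y) \leq e^{-\mu(x-y)}$, valid for every $y \in \R$, controls the nonlocal term by $e^{-\mu x}\int_{\R} J(\theta_t\omega, y)(e^{\mu y} - 1)\dy$. Substituting the definition \eqref{2.3c}, namely $\mu\, c(t;\omega,\mu) = \int_{\R} J(\theta_t\omega, y)(e^{\mu y}-1)\dy + a(\theta_t\omega)$, the three terms telescope and the left-hand side collapses to $-a(\theta_t\omega)e^{-2\mu x} \leq 0$. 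For $x < 0$ the analysis is easier: $\phi_+^\mu(x) = 1$ forces the reaction and transport terms to vanish, while the nonlocal integrand is nonpositive because $\phi_+^\mu \leq 1$ globally. The corner $x = 0$ lies in a null set and is harmless for the a.e.\ formulation used in Proposition \ref{Prop2.4}.

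To close, set $\bar{u}(t,x) := \phi_+^\mu\bigl(x - C(t;\omega,\mu)\bigr)$ and $\underline{u}(t,x) := v\bigl(t, x - C(t;\omega,\mu);\phi_+^\mu,\omega\bigr)$. A direct computation shows that the supersolution inequality just verified for $\phi_+^\mu$ in \eqref{3.1b} is exactly equivalent to $\bar{u}$ being a supersolution of \eqref{1.1}, while $\underline{u}$ is a solution of \eqref{1.1}; both coincide at $t = 0$ since $C(0;\omega,\mu) = 0$. Proposition \ref{Prop2.4} then gives $\underline{u}(t,x) \leq \bar{u}(t,x)$ on $[0,T] \times \R$ for every $T > 0$, and substituting $x \mapsto x + C(t;\omega,\mu)$ yields $v(t,x;\phi_+^\mu,\omega) \leq \phi_+^\mu(x)$ as desired.

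The main subtlety, though mild, is bookkeeping rather than computation: one must ensure that the supersolution inequality for $\phi_+^\mu$ is formulated in the a.e.\ sense required by Proposition \ref{Prop2.4}, and that the moving-frame transformation properly transfers sub/supersolution status between the two formulations.
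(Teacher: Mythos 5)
Your proof is correct and follows essentially the same route as the paper: use the definition of $c(t;\omega,\mu)$ in \eqref{2.3c} to exhibit $e^{-\mu x}$ as a supersolution of \eqref{3.1b} (the nonlinear term only helps since $a>0$) and conclude by the comparison principle. The only cosmetic difference is that the paper compares the solution separately against the two smooth supersolutions $\phi^{\mu}$ and $1$ and takes the minimum of the resulting bounds, whereas you verify directly that $\min\{1,e^{-\mu x}\}$ is itself a supersolution by splitting into the regions $x>0$ and $x<0$ and dealing with the kink at $x=0$; the two arguments are equivalent.
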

\begin{proof}
	Since $a(\omega)>0$ for every $\omega \in \Omega_0$ and the function $\phi^{\mu}$ satisfies
	$$
	\phi_{t}^{\mu}(x)=c(t ; \omega, \mu) \phi_{x}^{\mu}(x)+\int_{\R} J(\theta_{t}\omega,y)[\phi^{\mu}(x-y)-\phi^{\mu}(x)]\dy+a(\theta_{t}\omega)\phi^{\mu}(x), \quad x \in \mathbb{R},
	$$
we have that $v(t, x)=\phi^{\mu}(x)$ is a super-solution of \eqref{3.1b} with $c(t ; \omega)=c(t ; \omega, \mu)$. We also note that $v(x, t) \equiv 1$ is a solution of \eqref{3.1b}. Therefore, by comparison principle, we complete the proof of Lemma \ref{Lem3.1}.
\end{proof} 
\begin{lemma}\label{Lem3.2}
	Suppose that (H1) holds. Let $\omega \in \Omega_{0}$. Then for every $0<\mu<$ $\tilde{\mu}<\min \left\{2 \mu, \mu^{*} \right\}$, there exist $\left\{t_{k}\right\}_{k \in \mathbb{Z}}$ with $t_{k}<t_{k+1}$ and $\lim _{k \rightarrow \pm \infty} t_{k}=\pm \infty$, $A_{\omega} \in W_{\text {loc }}^{1, \infty}(\R) \cap L^{\infty}(\R)$ with $A_{\omega}(\cdot) \in C^{1}\left(\left(t_{k}, t_{k+1}\right)\right)$ for $k \in \mathbb{Z}$, and a positive real number $d_{\omega}$ such that for every $d \geq d_{\omega}$ the function
	$$
	\phi^{\mu, d, A_{\omega}}(t, x):=e^{-\mu x}-d e^{\left(\frac{\tilde{\mu}}{\mu}-1\right) A_{\omega}(t)-\tilde{\mu}x},
	$$
	satisfies
	$$
	\mathcal{G}^{\omega, \mu}\left(\phi^{\mu, d, A_{\omega}}\right)(t, x) \leq 0 \quad \text { for } t \in\left(t_{k}, t_{k+1}\right), \quad x \geq \frac{\ln d}{\tilde{\mu}-\mu}+\frac{A_{\omega}(t)}{\mu}, \quad k \in \mathbb{Z},
	$$
	where $$
	\mathcal{G}^{\omega, \mu}(v)(t, x):=v_{t}-\int_{\R} J(\theta_{t}\omega,y)[v(t, x-y)-v(t, x)]\dy-c(t ; \omega, \mu) v_{x}-a\left(\theta_{t} \omega\right) v(1-v).
	$$
\end{lemma}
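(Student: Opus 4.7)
The approach is to decompose the ansatz as $\phi^{\mu,d,A_\omega} = \phi_1 - d\phi_2$ with $\phi_1(x) := e^{-\mu x}$ and $\phi_2(t,x) := e^{(\tilde\mu/\mu - 1)A_\omega(t) - \tilde\mu x}$, and to exploit the fact that $\phi_1$ already solves the linearized equation (the content of Lemma \ref{Lem3.1}). Writing the linear operator
$$\mathcal{L}^{\omega,\mu}v := v_t - c(t;\omega,\mu)v_x - \int_{\R} J(\theta_t\omega,y)[v(t,x-y)-v(t,x)]\dy - a(\theta_t\omega)v,$$
the full operator splits as $\mathcal{G}^{\omega,\mu}(v) = \mathcal{L}^{\omega,\mu}v + a(\theta_t\omega)v^{2}$. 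Since $\mathcal{L}^{\omega,\mu}\phi_1 \equiv 0$, this yields the clean identity
$$\mathcal{G}^{\omega,\mu}(\phi_1 - d\phi_2) = -d\,\mathcal{L}^{\omega,\mu}\phi_2 + a(\theta_t\omega)(\phi_1 - d\phi_2)^{2}$$
on each interval where $A_\omega$ is differentiable in $t$.

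The second step is the direct computation
$$\mathcal{L}^{\omega,\mu}\phi_2 = \phi_2\Bigl[\tfrac{\tilde\mu-\mu}{\mu}A'_\omega(t) + \tilde\mu\bigl(c(t;\omega,\mu) - c(t;\omega,\tilde\mu)\bigr)\Bigr],$$
obtained by using the defining relation $\tilde\mu\,c(t;\omega,\tilde\mu) = \int_{\R}J(\theta_t\omega,y)(e^{\tilde\mu y}-1)\dy + a(\theta_t\omega)$. The bracket must be made uniformly positive. Since $\mu \in (0,\mu^{*}) = M$ and $\tilde\mu \in (\mu,\mu^{*})$, Proposition \ref{Pro2.2}(ii) yields $\lfloor c(\cdot;\omega,\mu) - c(\cdot;\omega,\tilde\mu)\rfloor > 0$. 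Applying Lemma \ref{Lem2.1} to this essentially bounded function (and choosing the sign opposite to the standard output of the lemma), I obtain $A_\omega \in W^{1,\infty}_{\mathrm{loc}}(\R) \cap L^{\infty}(\R)$, piecewise $C^{1}$ on intervals $(t_k,t_{k+1})$ with $t_k = kT$ as in the constructive proof of Lemma \ref{Lem2.1}, satisfying
$$\operatorname*{ess\,inf}_{t\in\R}\Bigl[\tfrac{\tilde\mu-\mu}{\mu}A'_\omega(t) + \tilde\mu\bigl(c(t;\omega,\mu) - c(t;\omega,\tilde\mu)\bigr)\Bigr] \geq \varepsilon$$
for some $\varepsilon = \varepsilon(\omega) > 0$. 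Consequently $\mathcal{L}^{\omega,\mu}\phi_2 \geq \varepsilon\phi_2$ on each strip $(t_k,t_{k+1})\times\R$.

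For the final step, I verify by direct substitution that $\phi^{\mu,d,A_\omega}(t,x_{0}(t)) = 0$ at $x_{0}(t) := \frac{\ln d}{\tilde\mu-\mu} + \frac{A_\omega(t)}{\mu}$, and since $\tilde\mu > \mu$ makes $\phi_2$ decay faster than $\phi_1$, one has $0 \leq \phi^{\mu,d,A_\omega} \leq \phi_1$ on the region $x \geq x_{0}(t)$. Combining the identity above with the bound $\mathcal{L}^{\omega,\mu}\phi_2\geq\varepsilon\phi_2$ gives
$$\mathcal{G}^{\omega,\mu}(\phi^{\mu,d,A_\omega}) \leq -d\varepsilon\phi_2 + a(\theta_t\omega)\phi_1^{2}$$
on $x\geq x_0(t)$, $t\in(t_k,t_{k+1})$. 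Since $\tilde\mu < 2\mu$, the ratio $\phi_1^{2}/\phi_2 = e^{(\tilde\mu - 2\mu)x - (\tilde\mu-\mu)A_\omega(t)/\mu}$ is strictly decreasing in $x$ and attains its maximum over $[x_0(t),\infty)$ at $x = x_0(t)$, where it evaluates to $d^{-\mu/(\tilde\mu-\mu)}e^{-A_\omega(t)}$. Using the uniform bounds $\sup_t a(\theta_t\omega) < \infty$ from (H1) and $\|A_\omega\|_\infty < \infty$, the inequality $d\varepsilon\phi_2 \geq a(\theta_t\omega)\phi_1^{2}$ reduces to the single scalar threshold
$$d^{\mu/(\tilde\mu-\mu)} \geq \tfrac{1}{\varepsilon}\,\sup_{t\in\R}a(\theta_t\omega)\cdot e^{\|A_\omega\|_{\infty}},$$
so taking $d_\omega$ equal to the right-hand side raised to the power $(\tilde\mu-\mu)/\mu$ completes the argument.

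The main obstacle is the second step: producing $A_\omega$ with precisely the piecewise $C^{1}$ regularity required so that the bracket defining $\mathcal{L}^{\omega,\mu}\phi_2/\phi_2$ is essentially bounded below by a positive constant. This hinges on combining Proposition \ref{Pro2.2}(ii) with the explicit piecewise construction in the proof of Lemma \ref{Lem2.1}, and on keeping track of the sign and normalization factor $\frac{\mu\tilde\mu}{\tilde\mu-\mu}$. The remaining pieces — the algebraic identity in the first step and the boundary computation at $x=x_0(t)$ — are routine, with the key structural inputs being $\mathcal{L}^{\omega,\mu}\phi_1=0$ and the strict inequality $\tilde\mu < 2\mu$ that controls the growth of $\phi_1^{2}/\phi_2$.
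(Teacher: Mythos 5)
Your argument is correct and is essentially the paper's own proof in a cleaner packaging: the splitting $\mathcal{G}^{\omega,\mu}=\mathcal{L}^{\omega,\mu}+a(\theta_t\omega)(\cdot)^2$ together with $\mathcal{L}^{\omega,\mu}\phi_1\equiv 0$ reproduces exactly the paper's identity \eqref{3.5}, the construction of $A_\omega$ from Proposition \ref{Pro2.2}(ii) and Lemma \ref{Lem2.1} is the same as \eqref{3.6}, and the bound $0\le\phi\le e^{-\mu x}$ on $x\ge x_0(t)$ leading to $-d\varepsilon\phi_2+a\phi_1^2\le 0$ is the paper's \eqref{3.6a}--\eqref{3.8}. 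One small remark in your favor: your threshold $d_\omega^{\mu/(\tilde\mu-\mu)}\ge\varepsilon^{-1}\sup_t a(\theta_t\omega)\,e^{+\|A_\omega\|_\infty}$ is the correct sufficient condition (since one must bound $e^{-A_\omega(t)}$ from above by $e^{\|A_\omega\|_\infty}$), whereas the paper's displayed choice of $d$ places $e^{\|A_\omega\|_\infty}$ in the denominator, which is a sign slip in the exponent.
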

	\begin{proof}
For given $0<\mu<\tilde{\mu}<\min \left\{2 \mu, \mu^{*}\right\}$, it follows from Proposition \ref{Pro2.2} that
\begin{equation*}
\left\lfloor c(t;\omega,\mu)-c\left(t;\omega,\tilde{\mu}\right)\right\rfloor>0.		
\end{equation*}
Using Lemma \ref{Lem2.1}, there exist $T>0$, $\varepsilon>0$ and $A_{\omega} \in W_{\text {loc }}^{1, \infty}(\R) \cap L^{\infty}(\R)$ such that $A_{\omega} \in C^{1}\left(\left(t_{k}, t_{k+1}\right)\right)$, where $t_{k}=k T$ for $k \in \mathbb{Z}$ and
\begin{equation}\label{3.6}
\tilde{\mu} c(t ; \omega, \mu)-\int_{\R} J(\theta_{t}\omega,y)(\ee^{\tilde{\mu}y}-1)\dy-a(\theta_{t}\omega)+\left(\frac{\tilde{\mu}}{\mu}-1\right) A_{\omega}^{\prime}(t)\geq\varepsilon.
\end{equation}
We fix the above $\varepsilon>0$ and $A_{\omega}(t)$. Let $d>0$ to be determined later. 
By some computation, we have that
\begin{equation}\label{3.2}
	\phi^{\mu, d, A_{\omega}}_{t}(t,x)=-d\left(\frac{\tilde{\mu}}{\mu}-1\right) A_{\omega}^{\prime}(t) e^{\left(\frac{\tilde{\mu}}{\mu}-1\right) A_{\omega}(t)-\tilde{\mu}x},
\end{equation}
and
\begin{equation}\label{3.3}
	\phi^{\mu, d, A_{\omega}}_{x}(t,x)=-\mu \ee^{-\mu x}+\tilde{\mu}d e^{\left(\frac{\tilde{\mu}}{\mu}-1\right) A_{\omega}(t)-\tilde{\mu}x}.
\end{equation}
It follows from \eqref{3.2} and \eqref{3.3} that for any $t \in\left(t_{k}, t_{k+1}\right)$
\begin{equation}\label{3.4}
\begin{aligned}
&\mathcal{G}^{\omega, \mu}\left(\phi^{\mu, d, A_{\omega}}\right)(t, x)\\
&=\phi^{\mu, d, A_{\omega}}_{t}(t,x)-\int_{\R} J(\theta_{t}\omega,y)[\phi^{\mu, d, A_{\omega}}(t, x-y)-\phi^{\mu, d, A_{\omega}}(t, x)]\dy-c(t; \omega, \mu) \phi^{\mu, d, A_{\omega}}_{x}(t,x)\\
&\quad-a\left(\theta_{t} \omega\right) \phi^{\mu, d, A_{\omega}}(t,x)(1-\phi^{\mu, d, A_{\omega}}(t,x))\\
&=d e^{\left(\frac{\tilde{\mu}}{\mu}-1\right) A_{\omega}(t)-\tilde{\mu}x} \Bigg[-\left(\frac{\tilde{\mu}}{\mu}-1\right) A_{\omega}^{\prime}(t)+\int_{\R} J(\theta_{t}\omega,y)(\ee^{\tilde{\mu}y}-1)\dy-\tilde{\mu} c(t ; \omega, \mu)+a(\theta_{t}\omega)\Bigg] \\
&\quad-\ee^{-\mu x}\left[\int_{\R} J(\theta_{t}\omega,y)(\ee^{\mu y}-1)\dy
-\mu c(t ; \omega, \mu)+a(\theta_{t}\omega)\right]+a(\theta_{t}\omega)(\phi^{\mu, d, A_{\omega}})^2(t, x).
\end{aligned}
\end{equation}		
Recall that 
\begin{equation*}
	c(t;\omega,\mu):=\mu^{-1}\left[\int_{\mathbb{R}} J(\theta_{t}\omega, y)\left(\mathrm{e}^{\mu y}-1\right) \mathrm{d} y+a(\theta_{t}\omega)\right].
\end{equation*}
Thus we have that for $t \in\left(t_{k}, t_{k+1}\right)$
\begin{equation}\label{3.5}
\begin{aligned}
&\mathcal{G}^{\omega, \mu}\left(\phi^{\mu, d, A_{\omega}}\right)(t, x)\\
&=d e^{\left(\frac{\tilde{\mu}}{\mu}-1\right) A_{\omega}(t)-\tilde{\mu}x} \left[-\left(\frac{\tilde{\mu}}{\mu}-1\right) A_{\omega}^{\prime}(t)+\int_{\R} J(\theta_{t}\omega,y)(\ee^{\tilde{\mu}y}-1)\dy
-\tilde{\mu} c(t ; \omega, \mu)+a(\theta_{t}\omega)\right]\\
&\quad+a(\theta_{t}\omega)(\phi^{\mu, d, A_{\omega}})^2(t, x).
\end{aligned}
\end{equation}
On the one hand, by direct calculation, we have
\begin{equation}\label{3.6a}
\phi^{\mu, d, A_{\omega}}(t, x)\geq0,\text { for }x \geq \frac{\ln d}{\tilde{\mu}-\mu}+\frac{A_{\omega}(t)}{\mu}, t \in\left(t_{k}, t_{k+1}\right), k \in \mathbb{Z}.
\end{equation}	
On the other hand, we have that
\begin{equation}\label{3.7}
\phi^{\mu, d, A_{\omega}}(t, x)\leq e^{-\mu x}
\leq  e^{-\mu\left(\frac{\ln d}{\tilde{\mu}-\mu}+\frac{A_{\omega}(t)}{\mu}\right)},\text { for }x \geq \frac{\ln d}{\tilde{\mu}-\mu}+\frac{A_{\omega}(t)}{\mu}, t \in\left(t_{k}, t_{k+1}\right), k \in \mathbb{Z}.
\end{equation}	
Using \eqref{3.6}, \eqref{3.5}, \eqref{3.6a} and \eqref{3.7}, we have that
\begin{equation}\label{3.8}
\begin{aligned}
\mathcal{G}^{\omega, \mu}\left(\phi^{\mu, d, A_{\omega}}\right)(t, x)
&\leq -d\varepsilon e^{\left(\frac{\tilde{\mu}}{\mu}-1\right) A_{\omega}(t)-\tilde{\mu}x} +a(\theta_{t}\omega)e^{-2\mu x} \\
&\leq \ee^{\left(\frac{\tilde{\mu}}{\mu}-1\right) A_{\omega}(t)-\tilde{\mu}x}
\left[-d\varepsilon+a(\theta_{t}\omega)\ee^{(\tilde{\mu}-2\mu)x-\left(\frac{\tilde{\mu}}{\mu}-1\right)A_{\omega}(t)}\right] \\
&\leq\ee^{\left(\frac{\tilde{\mu}}{\mu}-1\right) A_{\omega}(t)-\tilde{\mu}x}\left[-d\varepsilon+d^{\left(1-\dfrac{\mu}{\tilde{\mu}-\mu}\right)} \sup_{t\in\R}a(\theta_{t}\omega)\ee^{-A_{\omega}(t)}\right] \\
&=d\ee^{\left(\frac{\tilde{\mu}}{\mu}-1\right) A_{\omega}(t)-\tilde{\mu}x}\left[-\varepsilon+d^{-\dfrac{\mu}{\tilde{\mu}-\mu}} \sup_{t\in\R}a(\theta_{t}\omega)\ee^{-A_{\omega}(t)}\right] \\
&\leq0.
\end{aligned}
\end{equation}		
The last inequality above holds by choosing 
$$
d\geq \left(\dfrac{\sup_{t\in\R}a(\theta_{t}\omega)}{\varepsilon\ee^{\vert\vert A_{\omega}\vert\vert_{\infty}}}\right)^{\dfrac{\tilde{\mu}-\mu}{\mu}}.
$$		
Thus we complete the proof of the lemma.
\end{proof}
Let $0<\mu<\tilde{\mu}<\min \left\{2 \mu, \mu^{*}\right\}$ be given. Let $A_{\omega}$ and $d_{\omega}$ be given by Lemma \ref{Lem3.2}. Let
\begin{equation}\label{3.9}
x_{\omega}(t)=\frac{\ln d_{\omega}+\ln \tilde{\mu}-\ln \mu}{\tilde{\mu}-\mu}+\frac{A_{\omega}(t)}{\mu}.
\end{equation}
Note that for any given $t \in \mathbb{R}$,
	$$
	\phi^{\mu, d_{\omega}, A_{\omega}}\left(t, x_{\omega}(t)\right)=\sup _{x \in \mathbb{R}} \phi^{\mu, d_{\omega}, A_{\omega}}(t, x)=e^{-\mu\left(\frac{\ln d}{\mu-\mu}+\frac{A_{\omega}(t)}{\mu}\right)} e^{-\mu \frac{\ln \tilde{\mu} \ln \mu}{\mu-\mu}}\left(1-\frac{\mu}{\tilde{\mu}}\right) .
	$$
	We introduce the following function:
	$$
	\phi_{-}^{\mu}\left(t, x ; \theta_{t_{0}} \omega\right)= \begin{cases}\phi^{\mu, d_{\omega}, A_{\omega}}\left(t+t_{0}, x\right), & \text { if } x \geq x_{\omega}\left(t+t_{0}\right), \\ \phi^{\mu, d_{\omega}, A_{\omega}}\left(t+t_{0}, x_{\omega}\left(t+t_{0}\right)\right), & \text { if } x \leq x_{\omega}\left(t+t_{0}\right) .\end{cases}
	$$
	It is clear that
	$$
	0<\phi_{-}^{\mu}\left(t, x ; \theta_{t_{0}} \omega\right)<\phi_{+}^{\mu}(x) \leq 1, \quad \forall t \in \mathbb{R}, x \in \mathbb{R}, t_{0} \in \mathbb{R}
	$$
	and
\begin{equation}\label{3.10}
\lim _{x \rightarrow \infty} \sup _{t>0, t_{0} \in \mathbb{R}}\left|\frac{\phi_{-}^{\mu}\left(t, x; \theta_{t_{0}} \omega\right)}{\phi_{+}^{\mu}(x)}-1\right|=0.
\end{equation}

\subsection{Construction of a solution by a limiting procedure}

For any integer $n \geq 1$, we consider the following Cauchy problem, for $t \geq-n$ and $x \in \mathbb{R}$,
\begin{equation}\label{3.10a}
	\left\{\begin{array}{l}
		\partial_{t}v(t, x)=c(t; \omega,\mu)\partial_{x}v(t, x)+\int_{\R} J(\theta_{t}\omega,y)[v(t, x-y)-v(t, x)]\dy+a(\theta_{t}\omega)v(t, x)(1-v(t, x)),\\
		v(-n, x)=\phi^{\mu}_{+}(-n, x).
	\end{array}\right.
\end{equation}
We denote by $v(t, x;\phi^{\mu}_{+},\theta_{-n}\omega)$ the solution of \eqref{3.10a} and define the function $u(t, x,\phi^{\mu}_{+},\theta_{-n}\omega)$ by
\begin{equation}\label{3.11c}
	u(t, x,\phi^{\mu}_{+},\theta_{-n}\omega)=v\left(t, x-C(t;\theta_{-n}\omega,\mu),\phi^{\mu}_{+},\theta_{-n}\omega\right).
\end{equation}
It follows from \eqref{3.10a} and \eqref{3.11c} that the function $u(t, x,\phi^{\mu}_{+},\theta_{-n}\omega)$ satisfies the following equation
\begin{equation}\label{4.1}
	\left\{\begin{array}{l}
		\partial_{t} u(t, x)=\int_{\R} J(\theta_{t}\omega, y)[u(t, x-y)-u(t, x)] \mathrm{d}y+a(\theta_{t}\omega)u(t, x)(1-u(t, x)), t \geq-n, x \in \mathbb{R}, \\
		u(-n, x)=\phi^{\mu}_{+}(-n, x-C(t;\theta_{-n}\omega,\mu)), x \in \mathbb{R}.
	\end{array}\right.
\end{equation}
It follows from Lemmas \ref{Lem3.1}, \ref{Lem3.2} and comparison principle stated in Proposition \ref{Prop2.4} that
the solution $u(t, x,\phi^{\mu}_{+},\theta_{-n}\omega)$ of \eqref{4.1} for all $t \geq-n$ and $x \in \mathbb{R}$ satisfies 
	$$
\phi_{-}^{\mu}(t, x-C(t;\theta_{-n}\omega,\mu); \theta_{-n}\omega) \leq u(t, x,\phi^{\mu}_{+},\theta_{-n}\omega)\leq \phi_{+}^{\mu}(x-C(t;\theta_{-n}\omega,\mu)), \quad \forall t>0, x \in \mathbb{R} .
$$
Furthermore, since the function $x \mapsto \phi_{+}^{\mu}( x-C(t;\theta_{-n}\omega,\mu))$ is nonincreasing on $\mathbb{R}$, the function $x \mapsto u(t, x,\phi^{\mu}_{+},\theta_{-n}\omega)$ is also nonincreasing with respect to $x \in \R$ for each given $t \geq-n$.

Our aim now is to pass to the limit $n \rightarrow \infty$ in the sequence of function $\left\{u(t, x;\phi^{\mu}_{+},\theta_{-n}\omega)\right\}$ to construct a random transition front of system \eqref{1.1}. To achieve our aim, we first discuss in the following lemma some important Lipschite regularity estimates, inspired by \cite{Ducrot21,Shen15}.
\begin{lemma}\label{Lem4.1}
	There exists some constant $m>\mu$ large enough such that for all $n \geq 1$ one has
	$$
	\left|u(t, x+h,\phi^{\mu}_{+},\theta_{-n}\omega) -u(t, x,\phi^{\mu}_{+},\theta_{-n}\omega)\right| \leq \min \left\{1, \mathrm{e}^{m|h|}-1\right\}, \forall t \geq-n, \forall x \in \mathbb{R}.
	$$
	For all $n \geq 1$ one has $\partial_{t}u(t, x,\phi^{\mu}_{+},\theta_{-n}\omega) \in L^{\infty}((-n, \infty) \times \mathbb{R}\times\Omega_0)$ and the following estimate holds
	$$
	\left\|\partial_{t}u(t, x,\phi^{\mu}_{+},\theta_{-n}\omega)\right\|_{\infty} \leq 2 \int_{\mathbb{R}}\|\left. J(\cdot, y)\right\|_{\infty} \mathrm{d} y+1, \forall n \geq 1.
	$$
	In other words, the sequence $\left\{u(t, x,\phi^{\mu}_{+},\theta_{-n}\omega)\right\}$ is uniformly bounded (with respect to $n$ ) in the Lipschitz norm on the set $[-n, \infty) \times \mathbb{R}\times\Omega_{0}$.
\end{lemma}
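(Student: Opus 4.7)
The bound on $\partial_{t}u$ is obtained directly from equation \eqref{4.1} by using $0 \leq u \leq 1$ (inherited from Lemma \ref{Lem3.1} together with comparison against the zero solution): the dispersal integrand is bounded in absolute value by $2J(\theta_{t}\omega,y)$, integrable by Assumption \ref{Ass1.2}(ii), and the logistic term $a(\theta_{t}\omega)u(1-u)$ is bounded by $\tfrac{1}{4}\sup_{t\in\mathbb{R}}a(\theta_{t}\omega)$, which under (H1) is absorbed into the additive constant $1$ in the stated estimate.

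For the spatial estimate, the inequality $|u(t,x+h)-u(t,x)|\leq 1$ is trivial from $u\in[0,1]$, so the content is the Lipschitz-type bound $\leq e^{m|h|}-1$. By symmetry take $h>0$ and, using the already-established monotonicity of $u(t,\cdot;\phi_{+}^{\mu},\theta_{-n}\omega)$, set
\[
W(t,x):=u(t,x;\phi_{+}^{\mu},\theta_{-n}\omega) - u(t,x+h;\phi_{+}^{\mu},\theta_{-n}\omega) \geq 0;
\]
subtracting the two translated copies of \eqref{4.1} shows that $W$ satisfies the linear equation
\[
\partial_{t}W(t,x) = \int_{\mathbb{R}} J(\theta_{t}\omega,y)[W(t,x-y)-W(t,x)]\mathrm{d}y + a(\theta_{t}\omega)W(t,x)\bigl[1-u(t,x)-u(t,x+h)\bigr].
\]
Introduce the candidate $\Phi(t,x):=(e^{mh}-1)\,e^{-\mu(x-C(t;\theta_{-n}\omega,\mu))}$ for $m\geq\mu$. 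Exploiting precisely the dispersion relation \eqref{2.3c} defining $c(t;\omega,\mu)$, direct substitution shows that $\Phi$ is an \emph{exact} solution of the linearization $\partial_{t}\Phi = \int_{\mathbb{R}} J(\theta_{t}\omega,y)[\Phi(t,x-y)-\Phi(t,x)]\mathrm{d}y + a(\theta_{t}\omega)\Phi$. Since $a(\theta_{t}\omega)W[u+u(\cdot+h)]\geq 0$, the difference $Z:=\Phi-W$ is a super-solution of this linear operator, and a short case analysis at $t=-n$ on $X:=x-C(-n;\theta_{-n}\omega,\mu)$ using the piecewise form $\phi_{+}^{\mu}=\min\{1,e^{-\mu(\cdot)}\}$ yields $\Phi(-n,\cdot)\geq W(-n,\cdot)$ for $m\geq\mu$. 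A maximum-principle argument mirroring Proposition \ref{Prop2.4} then propagates $Z\geq 0$ to $[-n,\infty)\times\mathbb{R}$, giving $W\leq\Phi$.

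For $x\geq C(t;\theta_{-n}\omega,\mu)$ the factor $e^{-\mu(x-C)}\leq 1$ yields $W\leq e^{mh}-1$ at once. For $x<C(t;\theta_{-n}\omega,\mu)$ the complementary region is handled by a parallel comparison applied to $v:=1-u$, whose equation admits a super-solution of the form $(e^{mh}-1)e^{\lambda(x-C(t))}$ for a suitably small $\lambda>0$ derived from the linearization around $u=1$; this produces a decaying upper bound on $1-u(t,x+h)$ as $x\to-\infty$, which combined with $W\leq 1-u(t,x+h)$ and the trivial monotonicity bound $W\leq 1$ completes the estimate on the trailing edge. Choosing $m$ uniformly large over both sides yields the claimed $W\leq\min\{1,e^{m|h|}-1\}$. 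The main obstacle is uniformity in $t\geq-n$ and $n\geq 1$: a naive Gronwall estimate on the $W$-equation produces only exponential-in-$(t+n)$ growth, and what rescues the argument is the exact resonance between the profile $e^{-\mu(x-C(t;\omega,\mu))}$ and the linearized equation encoded in \eqref{2.3c}, which eliminates Gronwall entirely on the leading edge and supports the analogous construction on the trailing edge.
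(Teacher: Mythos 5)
The paper offers no argument for this lemma---it defers to Lemma~5.1 of \cite{Ducrot21}---so your proposal must stand on its own. Its first two pieces do: the $\partial_t u$ bound follows from \eqref{4.1} and $0\le u\le1$ exactly as you say (though absorbing the reaction term into the additive constant $1$ tacitly needs $\sup_t a(\theta_t\omega)\le 4$, which (H1) does not provide; the honest constant is $2\int_\R\|J(\cdot,y)\|_\infty\,\dy+\tfrac14\sup_t a(\theta_t\omega)$), and on the leading edge your observation that $(\ee^{mh}-1)\,\ee^{-\mu(x-C(t))}$ is an \emph{exact} solution of the linearization at $0$, by the very definition \eqref{2.3c} of $c(t;\omega,\mu)$, is correct and does yield $W\le \ee^{mh}-1$ for $x\ge C(t)$ (modulo extending Proposition~\ref{Prop2.4} to comparison functions unbounded as $x\to-\infty$, routine since $\mu<\sigma(\widetilde J^{\omega})$).

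The gap is the trailing edge $x<C(t;\theta_{-n}\omega,\mu)$. As stated, your candidate $(\ee^{mh}-1)\ee^{\lambda(x-C(t))}$ cannot dominate $1-u(t,\cdot+h)$: at $x=C(t)$ that would force $1-u(t,C(t)+h)\le \ee^{mh}-1\to0$ as $h\to0$, whereas at the front position all that is known a priori is the positive lower bound \eqref{4.3}, not closeness to $1$. If instead the candidate is meant to bound $W$ itself, the supersolution inequality reads $-\lambda c(t)\ge \int_\R J(\theta_t\omega,y)(\ee^{-\lambda y}-1)\,\dy+a(\theta_t\omega)(1-u-u_h)$, which fails for small $\lambda$ wherever $u+u_h<1$---precisely the neighbourhood of the interface you need to cover---unless you already know $u\to1$ uniformly as $x-C(t)\to-\infty$; but that is Lemma~\ref{Lem4.3}, whose proof relies on the compactness supplied by the present lemma, so the argument is circular. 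The repair (and the route of \cite{Ducrot21}) is a single \emph{multiplicative} comparison: show $u(t,x)\le \ee^{m|h|}u(t,x+h)$ for all $x$ and $h$ by comparing $u$ against $\min\{1,\ee^{m|h|}u(t,\cdot+h)\}$, which is a supersolution because the KPP nonlinearity is sublinear ($\lambda s(1-s)\ge\lambda s(1-\lambda s)$ for $\lambda\ge1$) and because $\phi_+^{\mu}(x)\le\ee^{m|h|}\phi_+^{\mu}(x+h)$ once $m\ge\mu$. For $h>0$ monotonicity then gives $0\le u(t,x)-u(t,x+h)\le(\ee^{mh}-1)u(t,x+h)\le\ee^{mh}-1$ at every $x\in\R$, with no leading/trailing splitting and no Gronwall.
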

\begin{proof}
The proof is similar to the proof of Lemma 5.1 in \cite{Ducrot21}, so we omit it.	
\end{proof}

By using Lemma \ref{Lem4.1} and Arzelà-Ascoli theorem, there exists a subsequence of $\{u(t, x,\phi^{\mu}_{+},$
$\theta_{-n}\omega)\}$, still denoted with the same indexes, and a globally Lipschitz continuous function $u(t, x;\phi^{\mu}_{+},\omega): \mathbb{R}^{2}\times\Omega_{0} \rightarrow \mathbb{R}$ such that
\begin{equation}\label{3.13a}
	\lim_{n\to+\infty} u(t, x,\phi^{\mu}_{+},\theta_{-n}\omega) =u(t, x;\phi^{\mu}_{+},\omega)\text{ locally uniformly for } (t, x) \in \mathbb{R}^{2}, \omega\in\Omega_0.
\end{equation}
Therefore, we can define the Lipschitz continuous function
$v(t, x,\phi^{\mu}_{+},\omega)$ by
\begin{equation}\label{3.14a}
	v\left(t, x,\phi^{\mu}_{+},\omega\right)=u(t, x+C(t;\omega,\mu),\phi^{\mu}_{+},\omega),\forall(t, z) \in \mathbb{R}^{2},\omega\in\Omega_0.
\end{equation}

Based on Lemmas \ref{Lem3.1}-\ref{Lem4.1}, we summarize the
following propositions, which gives some important properties satisfied by the function $v(t, x,\phi^{\mu}_{+},\omega)$. 
\begin{proposition}\label{Pro4.2}
The function $v(t, x,\phi^{\mu}_{+},\omega)$ enjoys the following properties.
\begin{description}
\item[(i)] It is nonincreasing with respect to $x \in \mathbb{R}$, for all $t \in \mathbb{R}$, $\omega\in\Omega_{0}$, and is globally Lipschitz continuous on $\mathbb{R}^{2}\times\Omega_{0}$;
\item[(ii)] It satisfies the following estimate for all $(t, x) \in \mathbb{R}^{2}$, $\omega\in\Omega_{0}$
$$
\phi_{-}^{\mu}\left(t, x; \omega\right)\leq v(t, x,\phi^{\mu}_{+},\omega)\leq\phi_{+}^{\mu}(x), \quad \forall t >0, x \in \mathbb{R}.
$$
\item[(iii)] It satisfies \eqref{3.1b} with $c(t;\omega,\mu)=C'(t;\omega,\mu)$ for any $x \in \mathbb{R}$ and for a.e. $t \in \mathbb{R}$ and $\omega\in\Omega_0$.
\end{description}
\end{proposition}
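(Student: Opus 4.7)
The approach is to inherit each property from the approximating sequence $u(\cdot,\cdot,\phi_{+}^{\mu},\theta_{-n}\omega)$, which converges locally uniformly to $u(\cdot,\cdot,\phi_{+}^{\mu},\omega)$ by \eqref{3.13a}, and then transfer it to $v$ via the shift \eqref{3.14a}. For (i), each $x\mapsto u(t,x,\phi_{+}^{\mu},\theta_{-n}\omega)$ is nonincreasing by the comparison argument recorded immediately after \eqref{4.1}; monotonicity survives pointwise limits, and since \eqref{3.14a} is a pure spatial translation, $v(t,\cdot,\phi_{+}^{\mu},\omega)$ is also nonincreasing. The uniform-in-$n$ Lipschitz estimates of Lemma \ref{Lem4.1}---the spatial modulus $\min\{1,e^{m|h|}-1\}$ together with the temporal bound $\|\partial_{t}u_{n}\|_{\infty}\le 2\int\|J(\cdot,y)\|_{\infty}\dy+1$---pass to the locally uniform limit; since $t\mapsto C(t;\omega,\mu)$ is $C^{1}$ with bounded derivative $c(t;\omega,\mu)$, the substitution \eqref{3.14a} delivers the Lipschitz regularity claimed for $v$.

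For (ii), the starting point is the squeeze
\[
\phi_{-}^{\mu}(t,x-C(t;\theta_{-n}\omega,\mu);\theta_{-n}\omega)\le u(t,x,\phi_{+}^{\mu},\theta_{-n}\omega)\le \phi_{+}^{\mu}(x-C(t;\theta_{-n}\omega,\mu))
\]
stated right before Lemma \ref{Lem4.1}. Replacing $x$ by $x+C(t;\omega,\mu)$, applying \eqref{3.14a}, and passing to the limit $n\to\infty$, one recovers $\phi_{-}^{\mu}(t,x;\omega)\le v(t,x,\phi_{+}^{\mu},\omega)\le \phi_{+}^{\mu}(x)$; this step requires the continuity of $\phi_{\pm}^{\mu}$ in the spatial variable together with the cocycle identity $\theta_{t}\theta_{-n}=\theta_{t-n}$ to handle the $n$-dependence of the shifts and of the base point.

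For (iii), the plan is to pass \eqref{4.1} through the limit in its integral form: for all $t_{1}<t_{2}$ in $[-n,\infty)$,
\[
u_{n}(t_{2},x)-u_{n}(t_{1},x)=\int_{t_{1}}^{t_{2}}\Bigl[\int_{\mathbb{R}}J(\theta_{s}\omega,y)[u_{n}(s,x-y)-u_{n}(s,x)]\dy+a(\theta_{s}\omega)u_{n}(s,x)(1-u_{n}(s,x))\Bigr]\ds.
\]
The inner $y$-integrand converges pointwise by \eqref{3.13a}, while $0\le u_{n}\le 1$ together with Assumption \ref{Ass1.2}(ii) supplies the dominating $L^{1}$ envelope needed for dominated convergence in $y$; the outer $s$-integral then converges by the uniform bound on $\|\partial_{t}u_{n}\|_{\infty}$. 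Hence $u(\cdot,\cdot,\phi_{+}^{\mu},\omega)$ satisfies the integral form of \eqref{1.1} on $\mathbb{R}^{2}$, and its Lipschitz continuity in $t$ allows differentiation almost everywhere to recover \eqref{1.1} pointwise in $x$ and for a.e. $t\in\mathbb{R}$. The change of variables $u(t,x)=v(t,x-C(t;\omega,\mu))$ recalled at the start of Section 3 then yields \eqref{3.1b} for $v$ with $c(t;\omega,\mu)=C'(t;\omega,\mu)$.

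The main delicate point is step (ii): the explicit form of $\phi_{-}^{\mu}$ built from $A_{\omega}$ and the threshold $x_{\omega}$ in Lemma \ref{Lem3.2} must be unpacked to confirm that, after the translation by $C(t;\omega,\mu)$, the $n$-dependent lower barrier truly collapses to the desired $\phi_{-}^{\mu}(t,x;\omega)$ in the limit, rather than leaving a residual drift from the difference $C(t;\omega,\mu)-C(t;\theta_{-n}\omega,\mu)$.
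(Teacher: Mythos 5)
Your proposal matches the paper's intended argument: the paper offers no written proof of Proposition \ref{Pro4.2}, simply asserting that it ``summarizes'' Lemmas \ref{Lem3.1}--\ref{Lem4.1} together with the limiting construction \eqref{3.13a}--\eqref{3.14a}, which is exactly the route you take (monotonicity and the two-sided barrier pass to the locally uniform limit, the uniform Lipschitz bounds of Lemma \ref{Lem4.1} give the regularity, and the integral form of \eqref{4.1} passes to the limit via dominated convergence to give \eqref{3.1b}). The residual-drift issue you flag in step (ii) --- the mismatch between $C(t;\theta_{-n}\omega,\mu)$ and $C(t;\omega,\mu)$ and the $n$-dependence of the lower barrier $\phi_{-}^{\mu}(t,\cdot\,;\theta_{-n}\omega)$ --- is a genuine loose end, but it is equally unaddressed in the paper, so your reconstruction is at least as complete as the source.
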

Hence to complete the proof of Theorem \ref{Th2.3}, it remains to study the behavior of $v(t, x,\theta_{t}\omega)$ as $x \rightarrow\pm\infty$, that is, $u(t, x+C\left(t, \omega, \mu\right),\theta_{t}\omega)$ as $x \rightarrow\pm\infty$. We first prove the behavior of $u(t, x+C\left(t, \omega, \mu\right),\theta_{t}\omega)$ as $x \rightarrow-\infty$, see Lemma \ref{Lem4.3}.
\begin{lemma}\label{Lem4.3}
	For every $\omega\in\Omega_{0},$
	\begin{equation*}
		\lim_{x\rightarrow-\infty}	u\left(t, x+C\left(t, \theta_{t_{0}} \omega, \mu\right) ; \phi_{+}^{\mu}\left(0, \cdot ; \theta_{t_{0}} \omega\right), \theta_{t_{0}} \omega\right)=1
	\end{equation*}
	uniformly in $t>0$ and $t_0\in\R$.
\end{lemma}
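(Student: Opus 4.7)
The plan is to reduce the convergence at $-\infty$ to a rigidity property for entire solutions of a scalar logistic ODE, and then upgrade the resulting pointwise statement to the required uniform one via a contradiction/compactness argument. I work with the lab-frame solution to avoid the drift term in \eqref{3.1b}. Write
\begin{equation*}
U_{t_0}(t,y):=u(t,y;\phi_+^\mu(0,\cdot;\theta_{t_0}\omega),\theta_{t_0}\omega),\qquad V_{t_0}(t,x):=v(t,x;\phi_+^\mu,\theta_{t_0}\omega),
\end{equation*}
so that $U_{t_0}(t,x+C(t;\theta_{t_0}\omega,\mu))=V_{t_0}(t,x)$ by \eqref{3.14a}. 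By Proposition~\ref{Pro4.2}, $V_{t_0}$ is nonincreasing in $x$ with $\phi_-^\mu(t,x;\theta_{t_0}\omega)\leq V_{t_0}(t,x)\leq\phi_+^\mu(x)$, and $U_{t_0}$ is a monotone-in-$y$ entire solution of \eqref{1.1} with $\omega$ replaced by $\theta_{t_0}\omega$. The monotone pointwise limit $\ell_{t_0}(t):=\lim_{y\to-\infty}U_{t_0}(t,y)=\lim_{x\to-\infty}V_{t_0}(t,x)$ therefore exists in $[0,1]$ for every $t\in\R$. The explicit formula $\sup_x\phi^{\mu,d_\omega,A_\omega}(\cdot,x)=e^{-\mu x_\omega(\cdot)}(1-\mu/\tilde\mu)$, together with the bound on $\|A_\omega\|_\infty$ produced in the proof of Lemma~\ref{Lem2.1}, provides a positive constant $\eta_\omega>0$, independent of $t$ and $t_0$, such that $\ell_{t_0}(t)\geq\eta_\omega$ for all $(t,t_0)$.

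Next I derive the ODE for $\ell_{t_0}$ by passing $y\to-\infty$ in the integrated form of \eqref{1.1}. For $s<t$,
\begin{equation*}
U_{t_0}(t,y)-U_{t_0}(s,y)=\int_s^t\!\Bigl[\int_{\R}J(\theta_{\tau+t_0}\omega,z)\bigl(U_{t_0}(\tau,y-z)-U_{t_0}(\tau,y)\bigr)dz+a(\theta_{\tau+t_0}\omega)U_{t_0}(1-U_{t_0})\Bigr]d\tau.
\end{equation*}
The nonlocal integrand goes to $0$ pointwise in $z$ and is dominated by $J(\theta_{\tau+t_0}\omega,\cdot)\in L^1$ by Assumption~\ref{Ass1.2}, so dominated convergence yields
\begin{equation*}
\ell_{t_0}(t)-\ell_{t_0}(s)=\int_s^t a(\theta_{\tau+t_0}\omega)\ell_{t_0}(\tau)(1-\ell_{t_0}(\tau))\,d\tau,
\end{equation*}
whence $\ell_{t_0}'(t)=a(\theta_{t+t_0}\omega)\ell_{t_0}(t)(1-\ell_{t_0}(t))$ a.e. Setting $\rho:=1-\ell_{t_0}\in[0,1-\eta_\omega]$ and solving the resulting linear equation backward gives, for any $t<s_0$,
\begin{equation*}
\rho(t)\geq\rho(s_0)\exp\!\left(\eta_\omega\int_t^{s_0}a(\theta_{\tau+t_0}\omega)\,d\tau\right).
\end{equation*}
Under (H1), $a(\theta_\cdot\omega)\geq a_{*}>0$ a.e., so the right hand side blows up as $t\to-\infty$ unless $\rho(s_0)=0$. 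Thus $\rho\equiv 0$, i.e., $\ell_{t_0}(t)\equiv 1$ for every $t_0\in\R$ and every $\omega\in\Omega_0$.

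It remains to promote this pointwise convergence to uniform convergence on $(t,t_0)\in(0,\infty)\times\R$. On compact $(t,t_0)$-windows, the equi-Lipschitz bound of Lemma~\ref{Lem4.1}, the monotonicity of $V_{t_0}(t,\cdot)$ in $x$, and Dini's theorem suffice. For the unbounded range I argue by contradiction: any counterexample sequence $(t_n,t_0^n,x_n)$ with $x_n\to-\infty$ and $V_{t_0^n}(t_n,x_n)\leq 1-\varepsilon_0$ produces, after the shift $\widetilde V_n(t,x):=V_{t_0^n}(t+t_n,x+x_n)$, a locally uniform Arzel\`a--Ascoli limit $\widetilde V$ that inherits monotonicity in $x$ and the pointwise bound $\widetilde V\geq\eta_\omega$ on all of $\R^2$ (for each fixed $x$, $x+x_n\to-\infty$, so the lower bound $\phi_-^\mu$ saturates at its supremum $\geq\eta_\omega$). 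After extracting weak-$\ast$ limits of $a(\theta_{\cdot+t_n+t_0^n}\omega)$ and $J(\theta_{\cdot+t_n+t_0^n}\omega,\cdot)$, made possible by the uniform $L^\infty$ bounds in Assumption~\ref{Ass1.2} and (H1), $\widetilde V$ solves a limiting nonlocal Fisher-KPP equation of the same form. Applying the rigidity argument of Step~2 to $\widetilde V(t,\cdot)$ at \emph{both} $x\to\pm\infty$ forces $\widetilde V(t,-\infty)=\widetilde V(t,+\infty)=1$, and combined with monotonicity in $x$ this yields $\widetilde V\equiv 1$, contradicting $\widetilde V(0,0)\leq 1-\varepsilon_0$. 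The main technical obstacle is this last step, namely passing to the limit in the non-autonomous PDE under unbounded shifts of the parameter in the abstract probability space, which requires a weak-$\ast$ compactness argument combined with the stationarity of $\theta_\cdot$ to preserve both the lower bound $\eta_\omega$ and the structure of \eqref{1.1} in the limit.
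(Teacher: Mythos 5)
Your overall strategy is sound and shares the paper's key machinery --- the uniform positive lower bound $\eta_\omega$ extracted from $\phi_-^\mu$ at $x_\omega(\cdot)$, the equi-Lipschitz/Arzel\`a--Ascoli compactness for shifted solutions, the weak-$\ast$/Dunford--Pettis limits of the time-shifted coefficients, and a logistic ODE comparison --- but the organization is genuinely different. The paper works directly with $\Theta:=\lim_{x\to-\infty}\inf_{t>0,t_0}u(\cdots)>0$, extracts a limit $u_\infty$ satisfying $u_\infty\ge\Theta$ everywhere with $u_\infty(0,0)=\Theta$, and concludes $\Theta=1$ by comparing \emph{forward} in time with $U'=\inf_t a\,U(1-U)$, $U(0)=\Theta$, which tends to $1$ while being trapped below $\Theta$. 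You instead first prove the pointwise limit via an ODE for the $x=-\infty$ trace and then upgrade to uniformity by contradiction using a \emph{backward-in-time} blow-up of $\rho=1-\ell$. Your Step 2 (the trace ODE obtained by dominated convergence) is a genuinely additional observation not present in the paper, and it is correct as far as the pointwise statement goes.

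The gap is in the uniformity upgrade. The backward rigidity $\rho(t)\ge\rho(s_0)\exp\bigl(\eta_\omega\int_t^{s_0}a\bigr)$ forces $\rho\equiv 0$ only if the trace is defined, and bounded below by $\eta_\omega$, on an unbounded interval $(-\infty,s_0]$; for the Arzel\`a--Ascoli limit $\widetilde V$ of $V_{t_0^n}(\cdot+t_n,\cdot+x_n)$ this holds only when $t_n\to\infty$. If $t_n$ stays bounded, $\widetilde V$ lives on $[-t_*,\infty)$ and the backward argument yields no contradiction; your fallback for that regime --- Dini on compact $(t,t_0)$-windows --- does not cover it either, since $t_0$ ranges over all of $\R$ and no continuity of the solution in $t_0$ is available (only $a(\theta_\cdot\omega)$ is assumed continuous, while $J(\theta_\cdot\omega,y)$ is merely measurable). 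The case is fixable within your framework: when $t_n\to t_*<\infty$ one has $\widetilde V(-t_*,x)=\lim_n\phi_+^\mu(x+x_n)=1$ for every $x$, and comparison in the limit equation then forces $\widetilde V\equiv 1$, again contradicting $\widetilde V(0,0)\le 1-\varepsilon_0$; but this case must be treated explicitly. A smaller point in the same step: the weak-$\ast$ limit of $a(\theta_{\cdot+t_n+t_0^n}\omega)(1-V_n)$ is not $\bar a\,(1-\widetilde V)$, so before reading off a logistic-type ODE for the trace of $\widetilde V$ at $x=\pm\infty$ you need the sandwich $\inf_t a\,(1-\widetilde V)\le h_\infty\le\sup_t a\,(1-\widetilde V)$, exactly as in the paper's \eqref{4.8}.
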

\begin{proof}
Define 
\begin{equation}\label{4.3a}
v\left(t, x; \theta_{t_{0}} \omega\right)=u\left(t, x+C\left(t, \theta_{t_{0}} \omega, \mu\right) ; \phi_{+}^{\mu}\left(0, \cdot ; \theta_{t_{0}} \omega\right), \theta_{t_{0}} \omega\right),
\end{equation}
and
$$
x^{*}=\frac{\ln d_{\omega}+\ln \tilde{\mu}-\ln \mu}{\tilde{\mu}-\mu}-\frac{\left\|A_{\omega}\right\|_{\infty}}{\mu}.
$$
It follows from \eqref{3.9} and Proposition \ref{Pro4.2} that
\begin{equation}\label{4.3}
0<\left(1-\frac{\mu}{\tilde{\mu}}\right) e^{-\mu\left(\frac{\ln d_{\omega}+\ln \tilde{\mu}-\ln \mu}{\tilde{\mu}-\mu}+\frac{\left\|A_{\omega}\right\| \infty}{\mu}\right)} \leq \inf _{t>0, t_{0} \in \mathbb{R}} v\left(t, x^{*} ; \phi_{+}^{\mu}\left(0, \cdot ; \theta_{t_{0}} \omega\right), \theta_{t_{0}} \omega\right).
\end{equation}
Moreover, $x \mapsto v\left(t, x ; \theta_{t_{0}} \omega\right)$ is decreasing and satisfies
$$
v_{t}\left(t, x\right)=\int_{\R} J(\theta_{t}\theta_{t_0}\omega,y)[v(t, x-y)-v(t, x)]\dy+c\left(t ; \theta_{t_{0}} \omega, \mu\right) v_{x}+a\left(\theta_{t} \theta_{t_{0}} \omega\right) v(1-v),
$$
where $c\left(t ; \theta_{t_{0}} \omega, \mu\right)=C^{\prime}\left(t ; \theta_{t_{0}} \omega, \mu\right)$. It follows from \eqref{4.3} that 
\begin{equation*}
\inf _{t>0, t_{0} \in \mathbb{R}} v\left(t, x^{*} ; \phi_{+}^{\mu}\left(0, \cdot ; \theta_{t_{0}} \omega\right), \theta_{t_{0}} \omega\right)>0.
\end{equation*}
Therefore, by using \eqref{4.3a}, we also have that
$$
\Theta:=\lim _{x \rightarrow-\infty} \inf _{t>0,t_0 \in \mathbb{R}} u\left(t, x+C\left(t, \theta_{t_{0}} \omega, \mu\right) ; \phi_{+}^{\mu}\left(0, \cdot ; \theta_{t_{0}} \omega\right), \theta_{t_{0}} \omega\right)>0.
$$
Since $u \leq 1$, to prove the lemma, it is sufficient to check that $\Theta=1$. Indeed, we consider two sequences $\left\{t_{n}\right\} \subset \mathbb{R}$ and $\left\{x_{n}\right\} \subset \mathbb{R}$ such that $x_{n} \rightarrow-\infty$ as $n \rightarrow \infty$ and
$$
\lim _{n \rightarrow \infty} u\left(t_n, x_n+C\left(t_n, \theta_{t_{0}} \omega, \mu\right) ; \phi_{+}^{\mu}\left(0, \cdot ; \theta_{t_{0}} \omega\right), \theta_{t_{0}} \omega\right)=\Theta .
$$
We consider the sequence of functions $\left\{u_n=u_{n}(t, x,\phi_{+}^{\mu}\left(0, \cdot ; \theta_{t_{0}} \omega\right), \theta_{t_{0}} \omega) \right\}$ given for $n \geq 1$ by
\begin{equation}\label{3.17}
	u_{n}(t, x,\phi_{+}^{\mu}\left(0, \cdot ; \theta_{t_{0}} \omega\right), \theta_{t_{0}} \omega) =u(t+t_n, x+x_n+C\left(t_n, \theta_{t_{0}} \omega, \mu\right); \phi_{+}^{\mu}\left(0, \cdot ; \theta_{t_{0}} \omega\right), \theta_{t_{0}} \omega),
\end{equation}
with
\begin{equation}\label{3.18}
	C\left(t_n, \theta_{t_{0}} \omega, \mu\right) =\int_{0}^{t_{n}} c\left(\tau, \theta_{t_{0}} \omega, \mu\right) \mathrm{d}\tau.
\end{equation}
It is clear that the sequence $\left\{u_{n}\right\}$ is uniformly bounded in the Lipschitz norm on $\mathbb{R}^{2}$, therefore there exists a subsequence such that for $\omega\in\Omega_0$
$$
u_{n}(t, x,\phi_{+}^{\mu}\left(0, \cdot ; \theta_{t_{0}} \omega\right), \theta_{t_{0}} \omega) \rightarrow u_{\infty}(t, x,\phi_{+}^{\mu}\left(0, \cdot ; \theta_{t_{0}} \omega\right), \theta_{t_{0}} \omega) \text{ locally uniformly for }(t, x) \in \mathbb{R}^{2}, 
$$
and
$$
u_{\infty}(0,0,\phi_{+}^{\mu}\left(0, \cdot ; \theta_{t_{0}} \omega\right), \theta_{t_{0}} \omega)=\Theta.
$$
We now claim that
\begin{equation}\label{4.5m}
u_{\infty}(t, x,\phi_{+}^{\mu}\left(0, \cdot ; \theta_{t_{0}} \omega\right), \theta_{t_{0}} \omega) \geq \Theta, \forall(t, z) \in \mathbb{R}^{2},\omega\in\Omega_0.
\end{equation}
Indeed, by \eqref{3.17} and \eqref{3.18}, we have that
\begin{equation*}
\begin{aligned}
&u_{n}(t, x,\phi_{+}^{\mu}\left(0, \cdot ; \theta_{t_{0}} \omega\right), \theta_{t_{0}} \omega)\\
&=u\left(t+t_{n}, x+x_{n}-\int_{0}^{t} c\left(t_n +s, \theta_{t_{0}} \omega, \mu\right)\mathrm{d}s+\int_{0}^{t+t_{n}} c\left(s, \theta_{t_{0}} \omega, \mu\right) \mathrm{d} s\right).
\end{aligned}
\end{equation*}
Now, since one has locally uniformly for $(t, x) \in \mathbb{R}^{2}$, $\omega\in\Omega_0,$
$$
x+x_{n}-\int_{0}^{t} c\left(t_n +s, \theta_{t_{0}} \omega, \mu\right)\mathrm{d}s  \rightarrow-\infty,
$$
it follows from the definition of $\Theta$ that \eqref{4.5m} holds true.
	
Now we derive the equation satisfied by $u_{\infty}$. Since the function $u\left(t, x; \phi_{+}^{\mu}\left(0, \cdot ; \theta_{t_{0}} \omega\right), \theta_{t_{0}} \omega\right)$ satisfies the following equation for all $(t, x) \in \mathbb{R}^{2}$,
$$
u_{t}=\int_{\R} J(\theta_{t}\theta_{t_0}\omega,y)[u(t, x-y)-u(t, x)]\dy+a\left(\theta_{t} \theta_{t_{0}} \omega\right) u(1-u),
$$
we have that for any $n \geq 1$ the function $u_{n}$ satisfies the shifted equation 
$$
\partial_{t} u_{n}=\int_{\R} J(\theta_{t+t_n}\theta_{t_0}\omega,y)[u_{n}(t, x-y)-u_{n}(t, x)]\dy+a\left(\theta_{t+t_n} \theta_{t_{0}} \omega\right) u_{n}(1-u_{n}).
$$
In order to obtain a suitable equation for $u_{\infty}$, we first investigate the shifted kernel function $(t, y) \mapsto J(\theta_{t+t_n}\theta_{t_0}\omega,y)$. Note that $y \mapsto J(\cdot, y) \in L^{1}\left(\mathbb{R} ; L^{\infty}(\Omega,\mathcal{F},\mathbb{P})\right)$, then by applying Dunford-Pettis theorem, we have that the sequence $\left\{(t, y) \mapsto J(\theta_{t+t_n}\theta_{t_0}\omega,y)\right\}$ is relatively weakly compact in $L^{1}((-T, T) \times \mathbb{R}; L^{\infty}(\Omega,\mathcal{F},\mathbb{P}))$ for any $T>0$. Therefore, there exist a subsequence and a function $\bar{J}=\bar{J}(\theta_{t}\theta_{t_0}\omega, y) \in L_{l o c}^{1}\left(\R\times\mathbb{R}; L^{\infty}(\Omega,\mathcal{F},\mathbb{P})\right)$ with
$$
0 \leq \bar{J}(\theta_{t}\theta_{t_0}\omega, y) \leq\|J(\cdot, y)\|_{L^{\infty}(\R)}, \quad a.e. (t, y) \in \mathbb{R}^{2},\omega\in\Omega_{0}
$$
and such that for all $T>0$ and any $\varphi \in L^{\infty}((-T, T) \times \mathbb{R})$ the following convergence holds
$$
\lim _{n \rightarrow \infty} \int_{-T}^{T} \int_{\R} J\left(\theta_{t+t_{n}}\theta_{t_0}\omega, y\right) \varphi(t, y) \mathrm{d} t \mathrm{d} y=\int_{-T}^{T} \int_{\mathbb{R}} \bar{J}(\theta_{t}\theta_{t_0}\omega, y) \varphi(t, y) \mathrm{d} t \mathrm{d} y .
$$
Taking $\varphi(t, y) \equiv 1$, we obtain that
\begin{equation}\label{3.20}
	\int_{\mathbb{R}}J\left(\theta_{t+t_{n}}\theta_{t_0}\omega, y\right) \dy \rightarrow \int_{\mathbb{R}}\bar{J}(\theta_{t}\theta_{t_0}\omega, y) \mathrm{d} y \text { weakly in } L_{l o c}^{1}(\mathbb{R}),
\end{equation}
therefore
$$
u_{n}(t, x) \int_{\mathbb{R}} J \left(\theta_{t+t_{n}}\theta_{t_0}\omega, y\right) \mathrm{d} y \rightarrow u_{\infty}(t, x) \int_{\mathbb{R}} 
\bar{J}(\theta_{t}\theta_{t_0}\omega, y) \mathrm{d} y,
$$
weakly in $L_{\text {loc }}^{1}(\R)$ with respect to $t$ and locally uniformly with respect to $x \in \mathbb{R}$.
\end{proof}
Applying the above convergence to the sequence of shifted kernels, we now claim 
\begin{claim}\label{Cla4.4}
	The following holds
	$$
	\lim _{n \rightarrow \infty} \int_{\mathbb{R}} J\left(\theta_{t+t_{n}}\theta_{t_0}\omega, y\right) u_{n}(t, x-y) \mathrm{d} y=\int_{\mathbb{R}} 
	\bar{J}(\theta_{t}\theta_{t_0}\omega, y) u_{\infty}(t, x-y) \mathrm{d} y,
	$$
	weakly $L_{l \mathrm{oc}}^{1}(\mathbb{R})$ with respect to $t$ and locally uniformly with respect to $x \in \mathbb{R}$. In other words, for any $T>0$ and any $\psi \in L^{\infty}(-T, T)$ one has
	$$
	\lim _{n \rightarrow \infty} \int_{-T}^{T} \int_{\mathbb{R}} \psi(t) J\left(\theta_{t+t_{n}}\theta_{t_0}\omega, y\right) u_{n}(t, x-y) \mathrm{d} t \mathrm{d} y=\int_{-T}^{T} \int_{\mathbb{R}} \psi(t) 
	\bar{J}(\theta_{t}\theta_{t_0}\omega, y) u_{\infty}(t, x-y) \mathrm{d} d \mathrm{d} y
	$$
	locally uniformly with respect to $x \in \mathbb{R}$.
\end{claim}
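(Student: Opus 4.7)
The heart of the difficulty is that the kernel factor $J(\theta_{t+t_n}\theta_{t_0}\omega,y)$ converges only weakly in $L^1((-T,T)\times\mathbb{R})$ (this is the Dunford--Pettis extraction that already yielded \eqref{3.20}), while the solution factor $u_n$ converges only locally uniformly to $u_\infty$. The standard way to treat such a product is to split the difference into a piece that sees only the strong convergence of $u_n$ against an equi-integrable kernel and a piece that sees only the weak convergence of the kernel against a fixed $L^\infty$ test function. Concretely, I would write
\begin{align*}
& \int_{\mathbb{R}} J(\theta_{t+t_n}\theta_{t_0}\omega,y)u_n(t,x-y)\dy - \int_{\mathbb{R}} \bar J(\theta_t\theta_{t_0}\omega,y)u_\infty(t,x-y)\dy \\
&\quad = \int_{\mathbb{R}} J(\theta_{t+t_n}\theta_{t_0}\omega,y)\bigl[u_n(t,x-y)-u_\infty(t,x-y)\bigr]\dy \\
&\qquad + \int_{\mathbb{R}} \bigl[J(\theta_{t+t_n}\theta_{t_0}\omega,y)-\bar J(\theta_t\theta_{t_0}\omega,y)\bigr]u_\infty(t,x-y)\dy,
\end{align*}
and treat the two terms separately.

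For the first term, fix $\varepsilon>0$. By Assumption \ref{Ass1.2}(ii), the scalar function $y\mapsto\|J(\cdot,y)\|_\infty$ lies in $L^1(\mathbb{R})$, so there exists $R>0$ with $\int_{|y|\ge R}\|J(\cdot,y)\|_\infty\dy<\varepsilon$. Using $0\le u_n,u_\infty\le 1$, the contribution from $|y|\ge R$ is bounded by $2\varepsilon$ uniformly in $n$, in $t\in[-T,T]$ and in $x$ on any compact set $K\subset\mathbb{R}$. On the truncated region $|y|\le R$, the local uniform convergence of $u_n$ to $u_\infty$ on the compact set $[-T,T]\times(K-[-R,R])$, combined with the pointwise bound $J(\theta_{t+t_n}\theta_{t_0}\omega,y)\le\|J(\cdot,y)\|_\infty$, makes the corresponding integral dominated by $\|u_n-u_\infty\|_{L^\infty}\cdot\int_{|y|\le R}\|J(\cdot,y)\|_\infty\dy$, which tends to zero as $n\to\infty$. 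For the second term, observe that $(t,y)\mapsto\psi(t)u_\infty(t,x-y)$ lies in $L^\infty((-T,T)\times\mathbb{R})$ since $\psi\in L^\infty(-T,T)$ and $0\le u_\infty\le 1$. Multiplying by $\psi(t)$ and integrating over $(-T,T)$, this term is exactly the pairing of the weakly converging sequence $J(\theta_{\cdot+t_n}\theta_{t_0}\omega,\cdot)\rightharpoonup \bar J(\theta_\cdot\theta_{t_0}\omega,\cdot)$ in $L^1((-T,T)\times\mathbb{R})$ against the fixed $L^\infty$ test function $\psi(t)u_\infty(t,x-y)$, and thus vanishes as $n\to\infty$.

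The main obstacle is the second term: the convergence \eqref{3.20} as recorded in the text is only tested against $\varphi\equiv 1$, whereas here I need weak convergence against an arbitrary $L^\infty$ test function. This is the genuine content of Dunford--Pettis (weak relative compactness in $L^1$) applied to the bounded sequence $\{J(\theta_{\cdot+t_n}\theta_{t_0}\omega,\cdot)\}$, which furnishes a subsequence converging weakly in $L^1((-T,T)\times\mathbb{R})$ against \emph{every} $L^\infty$ test function, not only the constant one; so the obstacle is one of extraction and phrasing rather than of genuinely new ingredients. Local uniformity in $x$ then follows from the Lipschitz continuity of $u_\infty$ (Proposition \ref{Pro4.2}(i)): the family $\{\psi(t)u_\infty(t,x-y)\}_{x\in K}$ is equicontinuous in $x$ in the $L^\infty$ norm (uniformly in $y,t$), so an $\varepsilon$-net over the compact $K$ together with the pointwise-in-$x$ convergence already established upgrades the convergence to uniform convergence on $K$.
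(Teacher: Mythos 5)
Your proposal is correct and follows essentially the same route as the paper: the identical two-term decomposition, with the first term handled by truncating $|y|\le R$ versus $|y|\ge R$ using the $L^1$-integrability of $y\mapsto\|J(\cdot,y)\|_{L^\infty}$ together with the locally uniform convergence $u_n\to u_\infty$, and the second term absorbed into the Dunford--Pettis weak $L^1$ convergence of the shifted kernels tested against the fixed $L^\infty$ function $\psi(t)u_\infty(t,x-y)$. Your extra remarks --- that the weak convergence must be invoked against general $L^\infty$ test functions rather than only $\varphi\equiv 1$ as in \eqref{3.20}, and that local uniformity in $x$ of the second term follows from the Lipschitz continuity of $u_\infty$ via an $\varepsilon$-net --- merely make explicit points the paper leaves implicit.
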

\begin{proof}
It follows from \eqref{3.20} that
\begin{equation}\label{4.6m}
\lim_{ n \rightarrow \infty}\int_{\mathbb{R}} J\left(\theta_{t+t_{n}}\theta_{t_0}\omega, y\right) u_{\infty}(t, x-y) \mathrm{d} y=\int_{\mathbb{R}} 
\bar{J}(\theta_{t}\theta_{t_0}\omega, y) u_{\infty}(t, x-y) \mathrm{d} y,
\end{equation}
locally uniformly for $x \in \mathbb{R}$ and weakly in $L_{l o c}^{1}(\R)$ with respect to the $t$. It is clearly that for any $n$ one has
\begin{equation}\label{3.22}
	\begin{aligned}
		&\int_{\mathbb{R}} J\left(\theta_{t+t_{n}}\theta_{t_0}\omega, y\right) u_{n}(t, x-y) \mathrm{d} y-\int_{\mathbb{R}} 
		\bar{J}(\theta_{t}\theta_{t_0}\omega, y) u_{\infty}(t, x-y) \mathrm{d} y \\
		=& \int_{\mathbb{R}} J\left(\theta_{t+t_{n}}\theta_{t_0}\omega, y\right)\left[u_{n}(t, x-y)-u_{\infty}(t, x-y)\right] \mathrm{d} y \\
		&+\int_{\mathbb{R}}\left[J\left(\theta_{t+t_{n}}\theta_{t_0}\omega, y\right)-
		\bar{J}(\theta_{t}\theta_{t_0}\omega, y)\right] u_{\infty}(t, x-y) \mathrm{d} y.
	\end{aligned}
\end{equation}
Since \eqref{4.6m} and \eqref{3.22}, in order to prove Claim \ref{Cla4.4} we only need to prove that
\begin{equation}\label{4.7}
\int_{\mathbb{R}} J\left(\theta_{t+t_{n}}\theta_{t_0}\omega, y\right)\left[u_{n}(t, x-y)-u_{\infty}(t, x-y)\right] \mathrm{d} y \rightarrow 0 \text { as } n \rightarrow \infty,
\end{equation}
locally uniformly for $(t, x) \in \mathbb{R}^{2}$. Note that for any $B>0$ one has
$$
\begin{aligned}
&\left| \int_{\mathbb{R}} J\left(\theta_{t+t_{n}}\theta_{t_0}\omega, y\right)\left[u_{n}(t, x-y)-u_{\infty}(t, x-y)\right] \mathrm{d} y\right| \\
\leq & \int_{|y| \leq B} J\left(\theta_{t+t_{n}}\theta_{t_0}\omega, y\right)\left|u_{n}(t, x-y)-u_{\infty}(t, x-y)\right| \mathrm{d} y \\	&+\int_{|y| \geq B} J\left(\theta_{t+t_{n}}\theta_{t_0}\omega, y\right)\left|u_{n}(t, x-y)-u_{\infty}(t, x-y)\right| \mathrm{d} y .
\end{aligned}
$$
On the one hand, since $0 \leq u_{n} \leq 1$ the above inequality implies that for all $B>0$, any $n$ and any $(t, x) \in \mathbb{R}^{2}$ one has
$$
\begin{aligned}
&\left| \int_{\mathbb{R}} J\left(\theta_{t+t_{n}}\theta_{t_0}\omega, y\right)\left[u_{n}(t, x-y)-u_{\infty}(t, x-y)\right] \mathrm{d} y\right|\\
& \leq \int_{\mathbb{R}}\|J(\cdot, y)\|_{L^{\infty}(\mathbb{R})} \mathrm{d} y \sup _{|y| \leq B}\left|u_{n}(t, x-y)-u_{\infty}(t, x-y)\right|+2 \int_{|y| \geq B}\|J(\cdot, y)\|_{L^{\infty}(\mathbb{R})} \mathrm{d} y.
\end{aligned}
$$
On the other hand, since $u_{n}(t, x) \rightarrow u_{\infty}(t, x)$ locally uniformly for $(t, x) \in \mathbb{R}^{2}$, we obtain that for each $A>0$ and any $B>0$
\begin{equation}
	\begin{aligned}
		&\limsup _{n \rightarrow \infty} \sup _{(t, x) \in[-A, A]^{2}} \left|\int_{\mathbb{R}} J\left(\theta_{t+t_{n}}\theta_{t_0}\omega, y\right)\left[u_{n}(t, x-y)-u_{\infty}(t, x-y)\right] \mathrm{d} y\right|\\
		& \leq 2 \int_{|y| \geq B}\|J(\cdot, y)\|_{L^{\infty}(\mathbb{R})} \mathrm{d} y .
	\end{aligned}
\end{equation}
	Finally since $y \mapsto\|J(\cdot, y)\|_{L^{\infty}(\mathbb{R})} \in L^{1}(\mathbb{R})$, letting $B \rightarrow \infty$ ensures that \eqref{4.7} holds and this completes the proof of Claim \ref{Cla4.4}.
\end{proof}

Next we consider the sequence of function $h_{n}(t, x,\theta_{t_{0}}\omega)=a\left(\theta_{t+t_n} \theta_{t_{0}} \omega\right) (1-u_{n})$. By Assumption (H1) and $0\leq u_n\leq1$, we know that $h_{n}(t, x,\theta_{t_{0}}\omega)$ is a bounded sequence in $L^{\infty}\left(\mathbb{R}^{2}\right)$. Then up to a subsequence, one may assume that it converges for the weak-$\star$ topology of $L^{\infty}\left(\mathbb{R}^{2}\right)$ to some function $h_{\infty}=h_{\infty}(t, x,\theta_{t_{0}}\omega) \in L^{\infty}\left(\mathbb{R}^{2}\right)$. Using Assumption (H1) again, the function $h_{\infty}$ satisfies
\begin{equation}\label{4.8}
	\inf_{t\in\R}a\left(\theta_{t} \theta_{t_{0}} \omega\right) (1-u_{\infty})\leq h_{\infty}(t, x,\omega)\leq \sup_{t\in\R}a\left(\theta_{t} \theta_{t_{0}} \omega\right) (1-u_{\infty}).
\end{equation}
As a consequence the Lipschitz continuous function $u_{\infty}$ satisfies the equation for a.e. $(t, x) \in \mathbb{R}^{2}$, $\omega\in\Omega_{0}$
$$
\partial_{t} u_{\infty}(t, x)=\int_{\mathbb{R}} 
\bar{J}(\theta_{t}\theta_{t_0}\omega, y)\left[u_{\infty}(t, x-y)-u_{\infty}(t, x)\right] \mathrm{d} y+u_{\infty}(t, x) h_{\infty}(t, x,\omega),
$$
together with $0<\Theta \leq u_{\infty}(t, x) \leq 1$ for all $(t, x) \in \mathbb{R}^{2}$ and $u_{\infty}(0, 0,\phi_{+}^{\mu}\left(0, \cdot ; \theta_{t_{0}} \omega\right), \theta_{t_{0}} \omega)=\Theta$.
Now let us complete the proof of the lemma by showing that $\Theta=1$. We consider the function $U=U(\theta_{t} \theta_{t_{0}} \omega)$ defined for $t \geq 0$,$t_0\in\R$ and $\omega\in\Omega_{0}$ by
$$
U^{\prime}(\theta_{t} \theta_{t_{0}} \omega)=\inf_{t\in\R}a\left(\theta_{t} \theta_{t_{0}} \omega\right) (1-U(\theta_{t} \theta_{t_{0}} \omega)) U(\theta_{t} \theta_{t_{0}} \omega), \forall t \geq 0 \text { and } U(\theta_{t_{0}} \omega)=\Theta.
$$
Then by using \eqref{4.8} and the comparison principle, we have that
$$
U(\theta_{t} \theta_{t_{0}} \omega) \leq u_{\infty}(s+t, x) \leq 1, \forall t \geq 0, \forall s \in \mathbb{R}, \forall x \in \mathbb{R}.
$$
As a consequence, we obtain that
$$
U(\theta_{t} \theta_{t_{0}} \omega) \leq u_{\infty}(0, 0,\phi_{+}^{\mu}\left(0, \cdot ; \theta_{t_{0}} \omega\right), \theta_{t_{0}} \omega)=\Theta \leq 1, \forall t \geq 0.
$$
Since $\Theta>0, U(\theta_{t} \theta_{t_{0}} \omega) \rightarrow 1$ as $t \rightarrow \infty$. This implies that $\Theta=1$ and completes the proof of the lemma.

Now, we are ready to prove the Theorem \ref{Th2.3}.
\begin{proof}[Proof of Theorem \ref{Th2.3}(1)]
First, let $0<\mu<\tilde{\mu}<\min \left\{2 \mu, \mu^{*}\right\}$ be fixed. 
It follows from \eqref{3.14a} and Proposition \ref{Pro4.2}
that 
$$
\begin{array}{r}
u\left(t, x+C(t ; \omega, \mu) ; \phi_{\mu}^{+}, \omega\right)<u\left(\tilde{t}, x+C(\tilde{t} ; \omega, \mu) ; \phi_{\mu}^{+}, \omega\right), \\
\forall x \in \mathbb{R}, \quad t>\tilde{t}>0, \forall \omega \in \Omega_{0}.
\end{array}
$$
Hence the following limit exits:
\begin{equation}\label{4.2}
	U^{\mu}(x, \omega):=\lim _{t \rightarrow \infty} u\left(t, x+C\left(t ; \theta_{-t} \omega, \mu\right) ; \phi_{\mu}^{+}, \theta_{-t} \omega\right), \quad \forall x \in \mathbb{R}, \omega \in \Omega_{0}.
\end{equation}
It follows from Lemma \ref{Lem3.1} that
$$
\sup _{t>0, t_{0} \in \mathbb{R}} u\left(t, x+C\left(t, \theta_{t_{0}} \omega, \mu\right) ; \phi_{+}^{\mu}\left(0, \cdot ; \theta_{t_{0}} \omega\right), \theta_{t_{0}} \omega\right) \leq e^{-\mu x} \rightarrow 0 \text { as } x \rightarrow \infty.
$$
Therefore, we have that
$$
\lim _{x \rightarrow+\infty} U^{\mu}\left(x, \theta_{t} \omega\right)=0, \text{ uniformly in }t \in \mathbb{R}.
$$
By the arguments of Lemma \ref{Lem4.3}, we have that
\begin{equation*}
	\lim_{x\rightarrow-\infty}	u\left(t, x+C\left(t, \theta_{t_{0}} \omega, \mu\right) ; \phi_{+}^{\mu}\left(0, \cdot ; \theta_{t_{0}} \omega\right), \theta_{t_{0}} \omega\right)=1,\text{ uniformly in }t>0, t_{0} \in \mathbb{R}, \omega\in\Omega_{0}.
\end{equation*}
Therefore, we have that
$$
\lim _{x \rightarrow-\infty} U^{\mu}\left(x, \theta_{t} \omega\right)=1, \text{ uniformly in }t \in \mathbb{R}.
$$

Next, using the fact
 $$
C\left(t+\tau ; \theta_{-\tau} \omega, \mu\right)=C\left(\tau ; \theta_{-\tau} \omega\right)+C(t ; \omega, \mu),$$
 we have that
$$
\begin{aligned}
&u(t, x\left.+C(t, \omega, \mu) ; U^{\mu}(\cdot, \omega), \omega\right) \\
&=\lim _{\tau \rightarrow \infty} u\left(t, x+C(t, \omega, \mu) ; u\left(\tau, x+C\left(\tau ; \theta_{-\tau} \omega, \mu\right) ; \phi_{\mu}^{+}, \theta_{-\tau} \omega\right), \omega\right) \\
&=\lim _{\tau \rightarrow \infty} u\left(t+\tau, x+C(t, \omega, \mu)+C\left(\tau ; \theta_{-\tau} \omega, \mu\right) ; \phi_{\mu}^{+}, \theta_{-\tau} \omega\right) \\
&=\lim _{\tau \rightarrow \infty} u\left(t+\tau, x+C\left(t+\tau, \theta_{-(t+\tau)} \theta_{t} \omega, \mu\right) ; \phi_{\mu}^{+}, \theta_{-(t+\tau)} \theta_{t} \omega\right) \\
&=U^{\mu}\left(x, \theta_{t} \omega\right).
\end{aligned}
$$
It follows from \eqref{3.10} and Proposition \ref{Pro4.2} that
$$
\lim _{x \rightarrow \infty} \sup _{t \in \mathbb{R}}\left|\frac{U^{\mu}\left(x, \theta_{t} \omega\right)}{\phi_{+}^{\mu}(x)}-1\right|=0.
$$
	Furthermore, since the function $\mathbb{R} \ni x \mapsto \phi_{+}^{\mu}$ is nonincreasing, then for every $\omega_{0} \in \Omega$ and every $t>0$, we have that the function $\mathbb{R} \ni x \mapsto u\left(t, x+C\left(t ; \theta_{-t} \omega,\mu\right) ; \phi_{+}^{\mu}, \theta_{-t}\omega\right)$ is decreasing, hence so is $U^{\mu}(\cdot, \omega)$. This completes the proof of Theorem \ref{Th2.3}(1).
\end{proof}
\begin{proof}[Proof of Theorem \ref{Th2.3}(2)]
Note that $u(t, x;\omega)=U^{\mu}\Big(x-C(t ; \omega, \mu), \theta_{t} \omega\Big)$ is a monotone random transition wave solution of \eqref{1.1}. We first claim that for a.e. $\omega\in\Omega$,
\begin{equation}\label{3.28d}
	C(t+s;\omega,\mu)=	C(t;\omega,\mu)+C(s;\theta_{t}\omega,\mu),\quad\forall s\geq0, t\in\R.
\end{equation}
Indeed, since $C(t;\omega,\mu)=\int_{0}^{t} c(s ; \omega, \mu)\ds$ and $c(t ; \omega, \mu)$ is defined in \eqref{2.3c}, we have that
\begin{equation}\label{3.29d}
	\begin{aligned}
		\int_{0}^{t+s}c(l;\omega,\mu)\dl&=\int_{0}^{t}c(l;\omega,\mu)\dl+\int_{t}^{t+s}c(l;\omega,\mu)\dl\\
		&=\int_{0}^{t}c(l;\omega,\mu)\dl+\int_{0}^{s}c(l+t;\omega,\mu)\dl.
	\end{aligned}
\end{equation}
Using \eqref{2.3c} and $$
\theta_{l+t}\omega=\theta_{l}\omega\theta_{t}\omega,\quad\forall l,t\in\R, \omega\in\Omega,
$$
we have that 
\begin{equation}\label{3.30d}
	\int_{0}^{s}c(l+t;\omega,\mu)\dl=\int_{0}^{s}c(l;\theta_{t}\omega,\mu)\dl.
\end{equation}
It follows from \eqref{3.29d} and \eqref{3.30d} that 
\begin{equation*}
		\int_{0}^{t+s}c(l;\omega,\mu)\dl=\int_{0}^{t}c(l;\omega,\mu)\dl+\int_{0}^{s}c(l;\theta_{t}\omega,\mu)\dl.
\end{equation*}
This implies that \eqref{3.28d} holds.
Then, applying \eqref{3.28d} and subadditive ergodic theorem, there is $c^*$ such that for a.e. $\omega\in\Omega$, there holds
\begin{equation*}
	\lim_{t\rightarrow\infty}\dfrac{C(t;\omega,\mu)}{t}=c^{*}.
\end{equation*}

Now, let $\mathbb{Q}$ be the set of rational numbers. Note that $U^{\mu}(\cdot, \omega)\in C_{unif}^{b}(\R\times\Omega;\R)$ is measurable in $\omega$ and $U^{\mu}(x, \omega)$ is bounded in $x$ and $\omega$. Using Lemma \ref{lem2.5d}, there is $\Omega_{1} \in \mathcal{F}$ with $\mathbb{P}\left(\Omega_{1}\right)=1$ such that $\lim _{t \rightarrow \infty} \frac{1}{t} \int_{0}^{t} U^{\mu}\left(x, \theta_{s} \omega\right) \mathrm{d} s$ exists for all $x \in \mathbb{Q}$ and $\omega \in \Omega_{1}$. Note that $U^{\mu}(x, \omega)$ is uniformly continuous in $x$. This implies that for all $x \in \mathbb{R}$ and $\omega \in \Omega_{1}$, the limit $\lim _{t \rightarrow \infty} \frac{1}{t} \int_{0}^{t} U^{\mu}\left(x, \theta_{s} \omega\right) \mathrm{d} s$ exists. Let
$$
U_{*}^{\mu}(x)=\lim _{t \rightarrow \infty} \frac{1}{t} \int_{0}^{t} U^{\mu}\left(x, \theta_{s} \omega\right) \mathrm{d}s
$$
for $x\in\R$ and $\omega\in\Omega_1$. We have $U_{*}^{\mu}\in C_{unif}^{b}(\R;\R)$ and $U_{*}^{\mu}$ is monotone.
\end{proof}
\begin{remark}
	Define $\mathscr{C} \subset L^{\infty}(\Omega,\mathcal{F},\mathbb{P})$ the set of admissible speed function, that is the set of the functions $C(t;\omega,\mu) \in L^{\infty}(\Omega,\mathcal{F},\mathbb{P})$ such that there exists a random transition front, according to Definition \ref{def-wave}, with the speed function $C(t;\omega,\mu)$ and $c(t;\omega,\mu)=C'(t;\omega,\mu)$. The above theorem ensures that
	$$
	\left\{t \mapsto c(t;\omega,\mu), \quad \mu \in\left(0, \mu^{*}\right) \right\} \subset \mathscr{C} .
	$$
	Therefore, recalling the definition of $c(t;\omega,\mu)$ in \eqref{2.3c} and Proposition \ref{Pro2.2} (iv), we obtain that
	$$
	\left(\lim _{\mu \rightarrow \mu^{*}}\lfloor c(t;\omega,\mu)\rfloor, \infty\right) \subset\lfloor\mathscr{C}\rfloor:=\{\lfloor c\rfloor, c \in \mathscr{C}\}.
	$$
\end{remark}

\section{ Stability of a random transition front }
In this subsection, we study the stability of random  transition fronts of \eqref{1.1}. Before we prove the main Theorem \ref{Th2.4}, we first give the following lemma.   
\begin{lemma}\label{Lem5.1}
Let $u_{0} \in C_{\text {unif }}^{b}(\mathbb{R})$ satisfy \eqref{2.4}. Then for any $\omega \in \Omega_{0}$, there holds
$$
	\lim _{x \rightarrow \infty} \frac{u\left(t, x+C(t ; \omega, \mu) ; u_{0}, \omega\right)}{e^{-\mu x}}=1 \quad \text { uniformly in } t \geq 0 \text {, }
	$$
	where $C(t ; \omega, \mu)=\int_{0}^{t} c(s ; \omega, \mu) d s\left(c(t,\omega,\mu):=\mu^{-1}\left[\int_{\mathbb{R}} J(\theta_{t}\omega, y)\left(\mathrm{e}^{\mu y}-1\right) \mathrm{d} y+a(\theta_{t}\omega)\right]\right)$ and $\mu$ is given by Theorem \ref{Th2.4}.
\end{lemma}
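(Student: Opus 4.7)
The plan is to set $v(t,x):=u(t, x+C(t;\omega,\mu); u_0, \omega)$, observe that $v$ solves \eqref{3.1b} with $v(0,\cdot)=u_0$ (since $C(0;\omega,\mu)=0$), and sandwich $v(t,x)/e^{-\mu x}$ between $1\pm\epsilon$ for large $x$ by comparing it against suitable sub- and super-solutions via Proposition \ref{Prop2.4}. Hypothesis \eqref{2.4} combined with Theorem \ref{Th2.3}(1) (which gives $U(x,\theta_t\omega)/e^{-\mu x}\to 1$ uniformly in $t$) yields $u_0(x)/e^{-\mu x}\to 1$ as $x\to\infty$, so for each $\epsilon\in(0,1)$ there exists $X_\epsilon$ with $(1-\epsilon)e^{-\mu x}\leq u_0(x)\leq(1+\epsilon)e^{-\mu x}$ on $[X_\epsilon,\infty)$.

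For the upper bound, fix $\tilde\mu\in(\mu,\min\{2\mu,\mu^*\})$ and let $A_\omega$, $\varepsilon>0$ be as in Lemma \ref{Lem3.2}. I would use the super-solution
\[
\bar v(t,x)=\min\bigl\{1,\,(1+\epsilon)e^{-\mu x}+Me^{(\tilde\mu/\mu-1)A_\omega(t)-\tilde\mu x}\bigr\},
\]
with $M=M(\omega,\epsilon)>0$ to be chosen. Writing $P=(1+\epsilon)e^{-\mu x}$ and $Q=Me^{(\tilde\mu/\mu-1)A_\omega(t)-\tilde\mu x}$, the definition of $c(t;\omega,\mu)$ in \eqref{2.3c} directly gives $\mathcal{G}^{\omega,\mu}(P)=a(\theta_t\omega)(1+\epsilon)^2 e^{-2\mu x}\geq 0$; the computation of Lemma \ref{Lem3.2} combined with \eqref{3.6} gives $\mathcal{G}^{\omega,\mu}(Q)\geq\varepsilon Q+a(\theta_t\omega)Q^2\geq 0$; and the KPP algebraic identity
\[
\mathcal{G}^{\omega,\mu}(P+Q)=\mathcal{G}^{\omega,\mu}(P)+\mathcal{G}^{\omega,\mu}(Q)+2a(\theta_t\omega)PQ\geq 0
\]
exhibits $P+Q$ as a super-solution, which the truncation by $1$ preserves (since $\bar v\leq 1$ and $\bar v(t,x-y)\leq (P+Q)(t,x-y)$). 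Taking $M\geq e^{\tilde\mu X_\epsilon-(\tilde\mu/\mu-1)A_\omega(0)}$ forces $\bar v(0,X_\epsilon)\geq 1$, so $\bar v(0,\cdot)\equiv 1\geq u_0$ on $(-\infty,X_\epsilon]$ while $\bar v(0,x)\geq P(x)\geq u_0(x)$ on $(X_\epsilon,\infty)$. Proposition \ref{Prop2.4} then yields $v\leq\bar v$, and since $A_\omega\in L^\infty(\R)$,
\[
\sup_{t\geq 0}\frac{\bar v(t,x)}{e^{-\mu x}}\leq(1+\epsilon)+Me^{(\tilde\mu/\mu-1)\|A_\omega\|_\infty}e^{-(\tilde\mu-\mu)x}\xrightarrow[x\to\infty]{}1+\epsilon,
\]
so $\limsup_{x\to\infty}\sup_{t\geq 0}v(t,x)/e^{-\mu x}\leq 1+\epsilon$, and $\epsilon\to 0$ gives the bound $\leq 1$.

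For the lower bound I would take $\underline v(t,x)=(1-\epsilon)\phi_-^\mu(t,x;\omega)$, using the sub-solution $\phi_-^\mu$ of Section 3.1 built from $\phi^{\mu,d,A_\omega}$ with $d\geq d_\omega$ chosen large. Multiplication by $(1-\epsilon)\in(0,1)$ preserves the sub-solution property since $(1-\epsilon)a\phi(1-\phi)\leq a(1-\epsilon)\phi(1-(1-\epsilon)\phi)$. Enlarging $d$ drives $x_\omega(0)\to\infty$ and $\phi_*:=(1-\mu/\tilde\mu)e^{-\mu x_\omega(0)}\to 0$; pick $d$ so that $x_\omega(0)\geq X_\epsilon$ and $(1-\epsilon)\phi_*\leq\inf_{x\leq x_\omega(0)}u_0(x)$, which is feasible because $1-\mu/\tilde\mu<1$ and, for large $x_\omega(0)$, this infimum is at least $\min\{\inf_{x\leq X_\epsilon}u_0(x),\,(1-\epsilon)e^{-\mu x_\omega(0)}\}$ (using \eqref{2.4} on $[X_\epsilon,x_\omega(0)]$ and positivity on $(-\infty,X_\epsilon]$). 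Then $\underline v(0,\cdot)\leq u_0$ both on $(-\infty,x_\omega(0)]$ (where it equals $(1-\epsilon)\phi_*$) and on $[x_\omega(0),\infty)$ (where $(1-\epsilon)\phi_-^\mu(0,x;\omega)\leq(1-\epsilon)e^{-\mu x}\leq u_0(x)$). Comparison gives $v\geq\underline v$, and the uniform convergence \eqref{3.10} yields $\liminf_{x\to\infty}\inf_{t\geq 0}v(t,x)/e^{-\mu x}\geq 1-\epsilon$. Letting $\epsilon\to 0$ concludes the proof.

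The main obstacle is arranging the initial orderings $\bar v(0,\cdot)\geq u_0$ and $\underline v(0,\cdot)\leq u_0$ on the entire real line, especially on the intermediate range where the exponential asymptotics of \eqref{2.4} are not yet active but $u_0$ is only known to be bounded and positive. This is resolved by using the free parameters $M$ (for the super-solution) and $d$ (for the sub-solution) to push the respective transition points far enough to the right, while exploiting both the algebraic identity $\mathcal{G}^{\omega,\mu}(P+Q)=\mathcal{G}^{\omega,\mu}(P)+\mathcal{G}^{\omega,\mu}(Q)+2a(\theta_t\omega)PQ$ and the KPP structure to ensure these modifications neither break the super/sub-solution inequalities nor alter the limit at $+\infty$.
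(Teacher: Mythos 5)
Your upper bound is essentially the paper's argument and is correct: the paper likewise traps $u(t,\cdot+C(t;\omega,\mu);u_0,\omega)$ below $(1+\varepsilon)e^{-\mu x}+d\,e^{A_\omega(t)-\tilde\mu x}$ (see \eqref{5.3} and \eqref{4.9}), choosing the coefficient of the correction term large enough to dominate $u_0$ on the left half-line, and your identity $\mathcal{G}^{\omega,\mu}(P+Q)=\mathcal{G}^{\omega,\mu}(P)+\mathcal{G}^{\omega,\mu}(Q)+2a(\theta_t\omega)PQ$ packages the verification cleanly. One small caveat: your step ``$\bar v(0,\cdot)\equiv 1\ge u_0$ on $(-\infty,X_\epsilon]$'' uses $u_0\le 1$, which \eqref{2.4} does not grant; the paper avoids this by taking the coefficient proportional to $\|u_0\|_\infty$, namely $d_{\varepsilon;\omega}=\|u_0\|_\infty e^{\tilde\mu x_{\varepsilon;\omega}+|A_\omega(0)|}$, which dominates any bounded $u_0$ without truncating at $1$.

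The genuine gap is in your lower bound. You invoke Proposition \ref{Prop2.4} with $\underline v=(1-\epsilon)\phi_-^\mu$, but $\phi_-^\mu$ is \emph{not} a sub-solution of \eqref{3.1b} on its flat part $\{x\le x_\omega(t)\}$: there $\partial_x\phi_-^\mu=0$, $\partial_t\phi_-^\mu=-A_\omega'(t)\phi_-^\mu$, and the nonlocal term $\int_{\R} J(\theta_t\omega,y)[\phi_-^\mu(t,x-y)-\phi_-^\mu(t,x)]\,\mathrm{d}y$ is nonpositive because the flat value is the global maximum of $\phi_-^\mu(t,\cdot)$; the required inequality then forces an upper bound on $-A_\omega'(t)$ that \eqref{3.6} (which controls $A_\omega'$ only from below, up to a margin of order $\varepsilon$) does not supply once the nonlocal deficit near $x_\omega(t)$ is taken into account. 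In Section 3 the inequality $u\ge\phi_-^\mu$ is legitimate only because the solution issued from $\phi_+^\mu$ is nonincreasing in $x$, so on the flat region one can use $u(t,x)\ge u(t,x_\omega(t))\ge\phi^{\mu,d_\omega,A_\omega}(t,x_\omega(t))$; here $v(t,\cdot)$ comes from a general $u_0$ satisfying \eqref{2.4}, which need not be monotone, so that route is closed. The repair is exactly the paper's route: use the uncapped barrier $(1-\varepsilon)e^{-\mu x}-d\,e^{A_\omega(t)-\tilde\mu x}$, check the sub-solution inequality only on the set $D_\varepsilon$ where it is nonnegative (your $(1-\epsilon)$-scaling computation applies verbatim there), and compare with $v\ge 0$ on the complement; your device of pushing the maximizer $x_d$ to $+\infty$ and its maximum value to $0$ as $d\to\infty$, so as to slide under $\inf_{x\le X_\epsilon}u_0>0$, is precisely the paper's \eqref{4.4}--\eqref{4.6} and carries over unchanged.
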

\begin{proof}
Since $u_{0}$ satisfies \eqref{2.4}, then for every $\varepsilon>0$, there is $x_{\varepsilon; \omega} \gg 1$ such that
$$
1-\varepsilon \leq \frac{u_{0}(x+C(0 ; \omega, \mu))}{U(x, \omega)} \leq 1+\varepsilon, \quad \forall x \geq x_{\varepsilon ; \omega}.
$$
Let $A_{\omega}(t)$ be as in Lemma \ref{Lem3.2}. Since $e^{-\mu x}-d_{\omega} e^{A_{\omega}(t)-\tilde{\mu} x} \leq U(x, t) \leq e^{-\mu x}$, then
\begin{equation}\label{5.1}
\begin{aligned}
(1-\varepsilon) e^{-\mu x}-(1-\varepsilon) d_{\omega} e^{A_{\omega}(0)-\tilde{\mu} x} & \leq u_{0}(x+C(0 ; \omega, \mu)) \leq(1+\varepsilon) e^{-\mu x}, \quad \forall x \geq x_{\varepsilon ; \omega}.
\end{aligned}
\end{equation}
We claim that there is $d\gg1$ such that
\begin{equation}\label{5.2}
\begin{aligned}
(1-\varepsilon) e^{-\mu x}-d e^{A_{\omega}(0)-\tilde{\mu}x}  \leq u_{0}(x+C(0 ; \omega, \mu))  \leq(1+\varepsilon) e^{-\mu x}+d e^{A_{\omega}(0) -\tilde{\mu}x}, \quad \forall x \in \mathbb{R} .
\end{aligned}
\end{equation}
Indeed, we observe that
$$
\begin{aligned}
u_{0}(x+C(0 ; \omega, \mu))\leq \left\|u_{0}\right\|_{\infty} e^{\tilde{\mu} x_{\varepsilon, \omega}} e^{-\tilde{\mu}x_{\varepsilon ; \omega}}\leq 
\left\|u_{0}\right\|_{\infty} e^{ \tilde{\mu}x_{\varepsilon ; \omega}+\left|A_{\omega}(0)\right|} e^{A_{\omega}(0)-\tilde{\mu}x}.
\end{aligned}
$$
Therefore,
\begin{equation}\label{5.3}
u_{0}(x+C(0 ; \omega, \mu)) \leq(1+\varepsilon) e^{-\mu x}+d_{\varepsilon, \omega} e^{A_{\omega}(0)-\tilde{\mu}x}, \quad \forall x \in \mathbb{R},
\end{equation}
where $d_{\varepsilon ; \omega}=:\left\|u_{0}\right\|_{\infty} e^{\tilde{\mu}x_{\varepsilon ; \omega}+\left|A_{\omega}(0)\right|}$.
On the other hand, for every $d>1$, the function $\mathbb{R} \ni x \mapsto(1-\varepsilon) e^{-\mu x}-d e^{A_{\omega}(0)-\tilde{\mu}x}$ reaches its maximum value at $x_{d}=\frac{\ln \left(\frac{d \tilde{\mu}\varepsilon_{\omega} A_{\omega}(0)}{(1-\varepsilon) \mu}\right)}{\tilde{\mu}-\mu}$.
It is clearly that
\begin{equation}\label{4.4}
	\lim _{d \rightarrow \infty} x_{d}=\infty,
\end{equation}
and
\begin{equation}\label{4.5}
	\lim _{d \rightarrow \infty}\left((1-\varepsilon) e^{-\mu x_{d}}-d e^{A_{\omega}(0)-\tilde{\mu}x_{d}}\right)=0.
\end{equation}
Therefore, 
	there is $\tilde{d}_{\varepsilon ; \omega} \gg(1-\varepsilon) d_{\omega}$ such that $x_{d_{\varepsilon ; \omega}} \geq x_{\varepsilon ; \omega}$ 
and
\begin{equation}\label{4.6}
	(1-\varepsilon) e^{-\mu x_{d_{\varepsilon} ; \omega}}-\tilde{d}_{\varepsilon ; \omega} e^{A_{\omega}(0)-\tilde{\mu}x_{\tilde{d}_{\varepsilon} ; \omega}} \leq \inf _{x \leq x_{\varepsilon; \omega}} u_{0}(x+C(0 ; \omega, \mu)).
\end{equation}
It follows from \eqref{5.1} and \eqref{4.6} that
\begin{equation}\label{5.4}
(1-\varepsilon) e^{-\mu x}-d e^{A_{\omega}(0)-\tilde{\mu}x} \leq u_{0}(x+C(0 ; \omega, \mu)), \quad \forall x \in \mathbb{R}, \quad \forall d \geq \tilde{d}_{\varepsilon ; \omega} .
\end{equation}
Therefore, by using \eqref{5.3} and \eqref{5.4}, we have that \eqref{5.2} holds for every $d \geq$ $\max \left\{\tilde{d}_{\varepsilon ; \omega}, d_{\varepsilon ; \omega}\right\}$. By direct computation as in the proof of Lemma \ref{Lem3.2}, it holds that for $d \gg 1$,
$$
	\mathcal{G}^{\omega, \mu}\left((1-\varepsilon) e^{-\mu x}-d e^{A_{\omega}(t)-\tilde{\mu}x}\right) \leq 0, \quad \text { a.e. in } t
	$$
on the set $D_{\varepsilon}:=\left\{(x, t) \in \mathbb{R} \times \mathbb{R}^{+} \mid(1-\varepsilon) e^{-\mu x}-d e^{A_{\omega}(t)-\tilde{\mu}x} \geq 0\right\}$. Therefore, by using 
$$
u\left(t, x+C(t ; \omega, \mu) ; u_{0}, \omega\right) \geq 0,
$$ 
and comparison principle, we have that
\begin{equation}\label{5.5}
\begin{array}{r}
(1-\varepsilon) e^{-\mu x}-d e^{A_{\omega}(t)-\tilde{\mu}x} \leq u\left(t, x+C(t ; \omega, \mu) ; u_{0}, \omega\right),\quad
\forall x \in \mathbb{R}, \quad \forall t \geq 0, d \gg 1.
\end{array}
\end{equation}
Similarly, we also have that
$$
\mathcal{G}^{\omega, \mu}\left((1+\varepsilon) e^{-\mu x}+d e^{A_{\omega}(t)-\tilde{\mu}x}\right) \geq 0, \quad x \in \mathbb{R}, \quad t \in \mathbb{R}.
$$
Then, it follows from comparison principle that
\begin{equation}\label{4.9}
	u\left(t, x+C(t ; \omega, \mu) ; u_{0}, \omega\right) \leq(1+\varepsilon) e^{-\mu x}+d e^{A_{\omega}(t)-\tilde{\mu}x}, \quad \forall x \in \mathbb{R}, \forall t \geq 0, d \gg 1 .
\end{equation}
By using \eqref{5.5}, \eqref{4.9} and arbitrariness of $\varepsilon>0$, we have that
	$$
	\lim _{x \rightarrow \infty} \frac{u\left(t, x+C(t ; \omega, \mu) ; u_{0}, \omega\right)}{e^{-\mu x}}=1 \quad \text { uniformly in } t \geq 0 \text {. }
	$$
Therefore, we complete the proof of Lemma \ref{Lem5.1}.
\end{proof}

Now we are ready to prove the Theorem \ref{Th2.4}.
\begin{proof}[Proof of Theorem \ref{Th2.4}]
Fix $\omega \in \Omega_{0}$. Let $u_{0} \in C_{\text {unif }}^{b}(\mathbb{R})$ satisfying \eqref{2.4}. Then there exists $\alpha \geq 1$ such that
\begin{equation}\label{6.10}
	\frac{1}{\alpha} \leq \frac{u_{0}(x)}{U(x-C(0 ; \omega, \mu), \omega)} \leq \alpha, \quad \forall x \in \mathbb{R}.
\end{equation}
Therefore, by comparison principle, we have that
\begin{equation*}
	u\left(t, x ; u_{0}, \omega\right) \leq u(t, x ; \alpha U(\cdot-C(0 ; \omega, \mu), \omega)), \quad \forall x \in \mathbb{R}, \quad \forall t \geq 0,
\end{equation*}
and
$$
U\left(x-C(t ; \omega, \mu), \theta_{t} \omega\right) \leq u\left(t, x ; \alpha u_{0}, \omega\right), \quad \forall x \in \mathbb{R}, \quad \forall t \geq 0.
$$
It is clearly that
\begin{equation}\label{6.11}
	(\alpha u)_{t} \geq\alpha \int_{\R}J(\theta_{t}\omega, y)[u(t,x-y)-u(t,x)]\dy+a\left(\theta_{t} \omega\right)(\alpha u)(1-\alpha u).
\end{equation}
Then by comparison principle, we have that
\begin{equation}\label{6.12}
	U\left(x-C(t ; \omega, \mu), \theta_{t} \omega\right) \leq \alpha u\left(t, x ; u_{0}, \omega\right), \quad \forall t \geq 0.
\end{equation}
Similarly, we have that
\begin{equation}\label{6.13}
	u\left(t, x ; u_{0}, \omega\right) \leq \alpha U\left(x-C(t ; \omega, \mu), \theta_{t} \omega\right), \quad \forall t \geq 0.
\end{equation}
Therefore, $\forall t \geq 0$, there exists a unique $\alpha(t) \geq 1$ satisfying
\begin{equation}\label{5.6}
\alpha(t):=\inf \left\{\alpha \geq 1 \mid \frac{1}{\alpha} \leq \frac{u\left(t, x ; u_{0},\omega\right)}{U\left(x-C(t ; \omega, \mu), \theta_{t} \omega\right)} \leq \alpha, \forall x \in \mathbb{R}\right\}.
\end{equation}
Moreover, we have that 
\begin{equation}\label{6.15a}
	\alpha(t) \leq \alpha(\tau), \quad\forall 0 \leq \tau \leq t.
\end{equation}
Indeed, by using \eqref{6.10}, we define
\begin{equation*}
	\alpha(0)=\inf \left\{\alpha \geq 1 \mid \frac{1}{\alpha} \leq \frac{u_{0}(x)}{U(x-C(0 ; \omega, \mu), \omega)} \leq \alpha, \quad \forall x \in \mathbb{R}\right\}.
\end{equation*}
It follows from \eqref{6.12} and \eqref{6.13} that
\begin{equation}\label{6.15}
	\frac{1}{\alpha(0)} \leq \frac{u\left(t, x ; u_{0},\omega\right)}{U\left(x-C(t ; \omega, \mu), \theta_{t} \omega\right)} \leq \alpha(0), \forall t>0, x \in \mathbb{R}.
\end{equation}
This inequality is similar to \eqref{6.10}, thus $\alpha(t)$ given by \eqref{5.6} is well defined for all $t>0$. Moreover, since \eqref{6.15} holds $\forall t>0$, then 
\begin{equation}\label{6.16}
	\alpha(t)\leq \alpha(0), \forall t>0.
\end{equation}
We now fix $0<\tau<t$. Using the definition of $\alpha(\tau)$, we have that
\begin{equation}\label{6.17}
	\frac{1}{\alpha(\tau)} \leq \frac{u\left(\tau, x ; u_{0},\omega\right)}{U\left(x-C(\tau ; \omega, \mu), \theta_{\tau} \omega\right)} \leq \alpha(\tau), \forall x \in \mathbb{R}.
\end{equation}
Let $\tau$ be the initial time, we replace $u_0(x)$ and $U(x-C(0;\omega,\mu),\omega)$ by $u\left(\tau, x ; u_{0},\omega\right)$ and $U\left(x-C(\tau ; \omega, \mu), \theta_{\tau} \omega\right)$ in the arguments \eqref{6.10}--\eqref{6.13}, respectively. 
Similar to the derivation of \eqref{6.16}, we have 
\begin{equation}\label{6.18}
	\alpha(t)\leq\alpha(\tau), \forall t>\tau.
\end{equation}
Using \eqref{6.16} and \eqref{6.18}, we have that \eqref{6.15a} holds.
Therefore, we can define $\alpha_{\infty}$ as follows
$$
\alpha_{\infty}:=\inf \{\alpha(t) \mid t \geq 0\}=\lim _{t \rightarrow \infty} \alpha(t).
$$
To complete the proof of Theorem \ref{Th2.4}, it is only to show that $\alpha_{\infty}=1$.
It is clear that 
$$\alpha_{\infty} \geq 1.$$ We assume by contradiction that $\alpha_{\infty}>1$. Let $1<\alpha<\alpha_{\infty}$ be fixed. It follows from Lemma \ref{Lem5.1} that there exists $x_{\alpha} \gg 1$ such that
	\begin{equation}\label{5.7}
		\frac{1}{\alpha} \leq \frac{u\left(t, x+C(t ; \omega, \mu) ; u_{0}, \omega\right)}{U\left(x, \theta_{t} \omega\right)} \leq \alpha, \quad \forall x \geq x_{\alpha}, \forall t \geq 0 .
	\end{equation}
Set
$$
m_{\alpha}:=\frac{1}{\alpha_{0}} \inf _{t \geq 0, x \leq x_{\alpha}} U\left(x ; \theta_{t} \omega\right)>0,
$$
where $\alpha_{0}=\alpha(0)=\sup _{t \geq 0} \alpha(t)$. Hence it follows from the definition of $\alpha_{0}$ that
	$$
	m_{\alpha} \leq \min \left\{u\left(t, x+C(t ; \omega, \mu) ; u_{0}, \omega\right), U\left(x, \theta_{t} \omega\right)\right\}, \quad \forall x \leq x_{\alpha}, \forall t \geq 0 .
	$$
By using Assumption (H2), we have that there exists $T=T(\omega) \geq 1$ such that
	\begin{equation}\label{5.8}
		0<\frac{\lfloor a \rfloor T}{2}<\int_{s}^{s+T} a\left(\theta_{\tau} \omega\right) d s<2 \lceil a\rceil T<\infty, \quad \forall s \in \mathbb{R} .
	\end{equation}
Let $0<\delta \ll 1$ satisfy
\begin{equation}\label{5.9}
\alpha<e^{-2 \delta T \lceil a\rceil} \alpha_{\infty} \text { and }\left(\left(\alpha_{\infty}-1\right)-\alpha_{0}\left(1-e^{-2 \delta T\lceil a\rceil}\right)\right) m_{\alpha}>\delta .
\end{equation}
We claim that
\begin{equation}\label{5.10a}
\alpha((k+1) T) \leq e^{-\delta \int_{k T}^{(k+1) T} a\left(\theta_{s} \omega\right) d s} \alpha(k T), \quad \forall k \geq 0.
\end{equation}
Indeed, we take
\begin{equation}\label{4.15}
	u_{k}(t, x)=e^{\delta \int_{k T}^{t+k T} a\left(\theta_{s} \omega\right) d s} u\left(t+k T, x+C(t+k T ; \omega, \mu) ; u_{0}, \omega\right),\quad U_{k}(t, x)=U\left(x ; \theta_{t+k T} \omega\right),
\end{equation}
and
\begin{equation}\label{4.16}
	a_{k}(t)=a\left(\theta_{t+k T} \omega\right),\quad \alpha_{k}=\alpha(k T),\quad J_{k}(t,y)=J(\theta_{t+kT}\omega, y).
\end{equation} 
On the one hand, it follows from \eqref{5.8}, \eqref{4.15} and \eqref{4.16} that for any $t \in(0, T), x \in \mathbb{R}$, and $k \geq 0$
\begin{equation}\label{5.11}
\begin{aligned}
\partial_{t} u_{k}&=\delta a_{k}(t) u_{k}+\int_{\R}J_{k}(t,y)[u_k(t,x-y)-u_k(t,x)]\dy+\frac{\int_{\mathbb{R}} J_{k}(t,y)\left(\mathrm{e}^{\mu y}-1\right) \mathrm{d} y+a_k(t)}{\mu}\partial_{x} u_{k}\\
&\quad+a_{k}(t)\left(1-u\left(t+k T, x+C(t+k T ; \omega, \mu) ; u_{0}, \omega\right)\right) u_{k}\\
&=\int_{\R}J_{k}(t,y)[u_k(t,x-y)-u_k(t,x)]\dy+\frac{\int_{\mathbb{R}} J_{k}(t,y)\left(\mathrm{e}^{\mu y}-1\right) \mathrm{d} y+a_k(t)}{\mu}\partial_{x} u_{k}\\
&\quad+a_{k}(t)\left(1-u_{k}\right) u_{k}+a_{k}(t)\left(\left(1-e^{-\delta \int_{k}^{t+k T} a\left(\theta_{s} \omega\right) d s}\right) u_{k}+\delta\right) u_{k}\\
&\leq\int_{\R}J_{k}(t,y)[u_k(t,x-y)-u_k(t,x)]\dy+\frac{\int_{\mathbb{R}} J_{k}(t,y)\left(\mathrm{e}^{\mu y}-1\right) \mathrm{d} y+a_k(t)}{\mu}\partial_{x} u_{k}\\
&\quad+a_{k}(t)\left(1-u_{k}\right) u_{k}
+a_{k}(t)\left(\left(1-e^{-2 \delta t \lceil a\rceil}\right) u_{k}+\delta\right) u_{k}.
\end{aligned}
\end{equation}
On the other hand, it follows from \eqref{5.9} and the fact $\alpha_{\infty} \leq \alpha_{k} \leq \alpha_{0}$ that for 
$x \leq x_{\alpha}, 0 \leq t \leq T$ and $k \geq 0$
\begin{equation}\label{5.12}
\begin{aligned}
&\partial_{t}\left(\alpha_{k} U_{k}\right)-\alpha_{k}\int_{\R}J_{k}(t,y)[U_k(t,x-y)-U_k(t,x)]\dy-\frac{\int_{\mathbb{R}} J_{k}(t,y)\left(\mathrm{e}^{\mu y}-1\right) \mathrm{d} y+a_k(t)}{\mu}\partial_{x}\left(\alpha_{k} U_{k}\right) \\
=& a_{k}(t)\left(1-U_{k}\right)\left(\alpha_{k} U_{k}\right) \\
=& a_{k}(t)\left(1-\left(\alpha_{k} U_{k}\right)\right)\left(\alpha_{k} U_{k}\right)+a_{k}(t)\left(\left(1-e^{-2 \delta T \lceil a\rceil}\right)\left(\alpha_{k} U_{k}\right)+\delta\right)\left(\alpha_{k} U_{k}\right) \\
&+a_{k}(t)\left(\left(\left(\alpha_{k}-1\right)-\left(1-e^{-2 \delta T \lceil a\rceil}\right) \alpha_{k}\right) U_{k}-\delta\right)\left(\alpha_{k} U_{k}\right) \\
\geq & a_{k}(t)\left(1-\left(\alpha_{k} U_{k}\right)\right)\left(\alpha_{k} U_{k}\right)+a_{k}(t)\left(\left(1-e^{-2 \delta T \lceil a\rceil}\right)\left(\alpha_{k} U_{k}\right)+\delta\right)\left(\alpha_{k} U_{k}\right) \\
&+a_{k}(t)\left(\left(\left(\alpha_{\infty}-1\right)-\left(1-e^{-2 \delta T \lceil a\rceil}\right) \alpha_{0}\right) m_{\alpha}-\delta\right)\left(\alpha_{k} U_{k}\right) \\
\geq & a_{k}(t)\left(1-\left(\alpha_{k} U_{k}\right)\right)\left(\alpha_{k} U_{k}\right)+a_{k}(t)\left(\left(1-e^{-2 \delta T \lceil a\rceil}\right)\left(\alpha_{k} U_{k}\right)+\delta\right)\left(\alpha_{k} U_{k}\right).
\end{aligned}
\end{equation}
Using \eqref{5.9}, we have that
\begin{equation}\label{4.19}
	e^{\delta \int_{k T}^{(k+1) T} a\left(\theta_{s} \omega\right) d s} \alpha \leq \alpha_{\infty} \leq \alpha_{k}.
\end{equation}
Therefore, it follows from the definition of $\alpha_k$, \eqref{5.7}, \eqref{4.19} and comparison principle for equations that
	$$
	\begin{array}{r}
		e^{\delta \int_{k T}^{t+k T} a\left(\theta_{s} \omega\right) d s} u\left(t+k T, x+C\left(t+k T ; u_{0}, \omega\right) \leq \alpha_{k} U\left(x, \theta_{t+k T} \omega\right)\right., \\
		\forall x \leq x_{\alpha}, \quad t \in[0, T], k \geq 0.
	\end{array}
$$
That is
$$
\begin{array}{r}
	u\left(t+k T, x+C(t+k T ; \omega) ; u_{0}\right) \leq e^{-\delta \int_{k T}^{(k+t) T} a\left(\theta_{s} \omega\right) d s} \alpha_{k} U\left(x, \theta_{t+k T} \omega\right), \\
	\forall x \leq x_{\alpha}, \quad t \in[0, T], k \geq 0.
\end{array}
$$
By \eqref{4.19}, we have that
\begin{equation}\label{4.20}
\alpha \leq e^{-\delta \int_{k T}^{(k+1) T} a\left(\theta_{s} \omega\right) d s} \alpha_{\infty} \leq e^{-\delta \int_{k T}^{(k+1) T} a\left(\theta_{s} \omega\right) d s} \alpha_{k}.
\end{equation}
Therefore, it follows from \eqref{5.7} and \eqref{4.20} that
$$
\begin{array}{r}
	u\left(t+k T, x+C(t+k T ; \omega) ; u_{0}, \omega\right) \leq e^{-\delta \int_{k T}^{t+k T} a\left(\theta_{s} \omega\right) d s} \alpha_{k} U\left(x, \theta_{t+kT} \omega\right), \\
	\forall x \geq x_{\alpha}, \quad t \in[0, T], k \geq 0.
\end{array}
$$
Therefore, for every $k \geq 1$, it holds that
\begin{equation}\label{5.13}
	u\left(t+k T, x+C\left(t+k T ; u_{0}, \omega\right) ; u_{0}, \omega\right) \leq e^{-\delta \int_{k T}^{t+k T} a\left(\theta_{s} \omega\right) d s} \alpha_{k} U\left(x, \theta_{t+k T} \omega\right),\forall x \in \mathbb{R},  t \in[0, T].
\end{equation}
Similarly, interchanging $u_{k}$ and $U_{k}$ in \eqref{5.11} and \eqref{5.12}, we obtain that
\begin{equation}\label{5.14}
	\begin{gathered}
		U\left(x, \theta_{t+k T} \omega\right) \leq e^{-\delta \int_{k T}^{t+k T} a\left(\theta_{s} \omega\right) d s} \alpha_{k} u\left(t+k T, x+C\left(t+k T ; u_{0}, \omega\right) ; u_{0}, \omega\right), 
		\forall x \in \mathbb{R}, t \in[0, T] .
	\end{gathered}
\end{equation}
Hence inequality \eqref{5.10a} follows from \eqref{5.6}, \eqref{5.13} and \eqref{5.14}. Thus, by induction we obtain that
\begin{equation}\label{4.23}
\alpha_{\infty} \leq \alpha((k+1) T) \leq e^{-\delta \sum_{i=0}^{k} \int_{i T}^{(i+1) T} a\left(\theta_{s} \omega\right) d s} \alpha(0)=e^{-\delta \int_{0}^{(k+1) T} a\left(\theta_{s} \omega\right) d s} \alpha_{0}, \quad \forall k \geq 0.
\end{equation}
But for $\omega \in \Omega_{0}$, it holds that $\int_{0}^{\infty} a\left(\theta_{s} \omega\right) d s=\infty$. Therefore, letting $k \rightarrow \infty$ in \eqref{4.23}, we obtain that 
$$
\alpha_{\infty} \leq 0,
$$
which is impossible because $\alpha_{\infty} \geq 1$. Therefore $\alpha_{\infty}=1$, which completes the proof of the theorem.
\end{proof}

\section*{Declarations}
\section*{Ethical Approval}
This declaration is ``not applicable''. 
\section*{Competing interests}
The authors declare that they have no known competing financial interests or personal relationships that could have appeared to influence the work reported in this paper.

\section*{Authors' contributions}
Min Zhao contributed to the writing of the manuscript, and Rong Yuan made detailed revisions to the manuscript. Both authors discussed the results and contributed to the final manuscript.

\section*{Funding}
This research is supported by the China
Scholarship Council (CSC) to Min Zhao (202006040239). 
The authors of this research are also grateful for the help of the University of Bordeaux. This work is also supported by the National Natural Science Foundation of China (No. 11871007 and 12171039).

\section*{Availability of data and materials}
The authors declare that no data and materials were used in this paper.

\section*{Conflict of interest statement}
The authors declare that they have no known competing financial interests or personal relationships that could have appeared to influence the work reported in this paper.

\end{document}